\newtheorem{lemma}{Lemma}[section]
\newtheorem{theorem}[lemma]{Theorem}
\newtheorem{definition}[lemma]{Definition}
\newtheorem{corollary}[lemma]{Corollary}
\author{Henry Towsner}
\title{A General Correspondence between Averages and Integrals}
\date{\today}
\begin{document}
\bibliographystyle{apalike}
\maketitle

\begin{abstract}
  Recent work has generalized the Furstenberg correspondence between sets of integers and dynamical systems to versions which involve sequences of finite graphs or sequences of $L^\infty$ functions.  We give a unified version of the theorem subsuming all these generalizations.
\end{abstract}

\section{Introduction}

The Furstenberg correspondence \cite{FurstenbergBook} was originally developed in order to use ergodic methods to prove Szemer\'edi's Theorem, that every set of integers with positive upper Banach density contains arbitrarily long arithmetic progressions.  The correspondence is based on the following theorem:
\begin{theorem}
  Let $E\subseteq \mathbb{Z}$ with positive upper Banach density be given.  Then there is a dynamical system $(Y,\mathcal{B},\mu,T)$ and a set $A\in\mathcal{B}$ with $\mu(A)>0$ such that for any finite set of integers $U$, the upper Banach density of $\bigcap_{n\in U} (E-n)$ is at least $\mu(\bigcap_{n\in U}T^n A)$.
\end{theorem}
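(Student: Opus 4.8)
The plan is to realize $E$ as a single point in a symbolic dynamical system and to extract $\mu$ as a weak$^*$ limit of empirical measures taken along intervals that witness the density of $E$. Concretely, I would take $Y=\{0,1\}^{\mathbb{Z}}$ with the product topology, which is compact and metrizable, let $T\colon Y\to Y$ be the shift (a homeomorphism), and let $x\in Y$ be the indicator sequence $\mathbf 1_E$. Setting $A=\{y\in Y: y(0)=1\}$, which is clopen, the key bookkeeping identity is that $T^m x$ lies in $\bigcap_{n\in U}T^nA$ precisely when $m$ belongs to $\bigcap_{n\in U}(E-n)$, up to the reflection $m\mapsto -m$; since upper Banach density is invariant under reflection and intervals map to intervals of equal length, the precise sign can be absorbed by fixing the direction of the shift. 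Thus membership of the shifted orbit in the cylinder sets generated by $A$ reads off exactly the combinatorial sets appearing in the statement.

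Next I would choose a sequence of intervals $I_k\subseteq\mathbb{Z}$ with $|I_k|\to\infty$ along which the relative density of $E$ converges to its upper Banach density $d^*(E)>0$, and form the empirical measures $\mu_k=\tfrac{1}{|I_k|}\sum_{m\in I_k}\delta_{T^m x}$. Because $Y$ is compact metrizable, the space of Borel probability measures on $Y$ is weak$^*$ compact and metrizable, so after passing to a subsequence I may assume $\mu_k\to\mu$ in the weak$^*$ topology. I would then verify that $\mu$ is $T$-invariant: for any $f\in C(Y)$ a telescoping estimate gives $\lvert\int f\circ T\,d\mu_k-\int f\,d\mu_k\rvert\le 2\lVert f\rVert_\infty/|I_k|\to 0$, so $\int f\circ T\,d\mu=\int f\,d\mu$, and hence $(Y,\mathcal{B},\mu,T)$ is a measure-preserving system with $\mathcal{B}$ the Borel $\sigma$-algebra.

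Finally I would read off the two conclusions from weak$^*$ convergence applied to the continuous indicators of the clopen cylinder sets. Since $\mathbf 1_A\in C(Y)$, we get $\mu(A)=\lim_k\mu_k(A)$, and as $\mu_k(A)$ is the relative density of $E$ in $I_k$ (up to the same reflection), our choice of intervals yields $\mu(A)=d^*(E)>0$. For a finite set $U$ the set $B_U=\bigcap_{n\in U}T^nA$ is again clopen, so $\mu(B_U)=\lim_k\mu_k(B_U)$, and by the bookkeeping identity $\mu_k(B_U)$ equals the relative density of $\bigcap_{n\in U}(E-n)$ in $I_k$. A limit of such densities along a fixed sequence of intervals of growing length cannot exceed the supremum defining the upper Banach density, so $\mu(B_U)\le d^*\!\bigl(\bigcap_{n\in U}(E-n)\bigr)$, which is the asserted inequality. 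I expect the main obstacle to be conceptual rather than computational: one must see that the single interval sequence chosen to witness $d^*(E)$ simultaneously produces convergent empirical averages for \emph{every} cylinder set (this is handled automatically by weak$^*$ compactness), and one must understand why only an inequality, not equality, is available for the sets $B_U$—namely, the fixed intervals $I_k$ need not realize the upper Banach density of each $\bigcap_{n\in U}(E-n)$, and it is precisely this that forces the density to dominate $\mu$ rather than equal it.
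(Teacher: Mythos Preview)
Your argument is correct and is exactly the classical Furstenberg construction. The paper derives this statement as a corollary of the general Theorem~\ref{main} (taking $G=S=\mathbb{Z}$, $X=\{0,1\}$, $E=\chi_{\hat E}$, and $\{I_n\}$ a sequence of intervals witnessing the upper Banach density); if you unwind the Furstenberg-style proof of that theorem in this special case you recover your setup verbatim---$Y=\{0,1\}^{\mathbb{Z}}$ with the shift, the distinguished point $\mathbf 1_E$, the clopen cylinder $A$, and a measure built as a limit of empirical averages along the chosen intervals. The only substantive difference is in how the limiting measure is produced: you invoke weak$^*$ compactness of probability measures on a compact metric space to get $\mu$ in one step, whereas the paper diagonalizes over the countable algebra of cylinder sets to define a finitely additive set function $\rho$ and then verifies $\sigma$-additivity by hand. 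Your packaging is cleaner for this particular corollary; the paper's more explicit route is written so that the same argument handles arbitrary second countable compact $X$ and countable $S$ without appealing to the Riesz representation theorem.
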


By a dynamical system $(Y,\mathcal{B},\mu,\{T_g\}_{g\in G})$, we mean a measure space $(Y,\mathcal{B},\mu)$ together with a collection of measurable, measure-preserving transformations $T_g:Y\rightarrow Y$ indexed by a semigroup $G$ such that $T_g\circ T_h=T_{gh}$.  When $G=\mathbb{Z}$ we often write $(Y,\mathcal{B},\mu,T)$ where $T=T_1$.

\cite{Bergelson} generalizes this to countable amenable groups.
\begin{definition}
  If $\{I_n\}_{n\in\mathbb{N}}$ is a left F\o lner sequence of $G$, for any $E\subseteq G$ define
\[\overline{d}_{\{I_n\}}(E)=\limsup_{n\rightarrow\infty}\frac{|E\cap I_n|}{|I_n|}\]

Say $E$ has positive upper density with respect to $\{I_n\}$ if $\overline{d}_{\{I_n\}}(E)>0$.
\end{definition}

\begin{theorem}
Let $G$ be a countable amenable group and assume that a set $E\subseteq G$ has positive upper density with respect to a left F\o lner sequence $\{I_n\}_{n\in\mathbb{N}}$.  Then there exists a dynamical system $(X,\mathcal{B},\mu,(T_g)_{g\in G})$ and a set $A\in\mathcal{B}$ with $\mu(A)=\overline{d}_{\{I_n\}}(E)$ such that for any $k\in\mathbb{N}$ and $g_1,\ldots,g_k\in G$, one has
\[\overline{d}_{\{I_n\}}(E\cap g_1^{-1}E\cap\cdots\cap g_k^{-1}E)\geq\mu(A\cap T^{-1}_{g_1}A\cdots\cap T^{-1}_{g_k}A)\]
\end{theorem}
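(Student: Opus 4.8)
The plan is to realize the dynamical system as a symbolic shift on the compact metrizable space $X=\{0,1\}^{G}$ (compactness is Tychonoff, metrizability uses that $G$ is countable) equipped with the left action $(T_{g}x)(h)=x(hg)$, which indeed satisfies $T_{g}\circ T_{h}=T_{gh}$. I would identify $E$ with the point $\xi=\mathbf{1}_{E}\in X$ and take $A=\{x\in X:x(e)=1\}$, the cylinder over the identity $e\in G$. A direct computation then yields the correspondence dictionary: since $(T_{h}\xi)(e)=\xi(h)$ and $T_{g_{i}}T_{h}\xi=T_{g_{i}h}\xi$, for every $h\in G$ one has $T_{h}\xi\in A\cap T_{g_{1}}^{-1}A\cap\cdots\cap T_{g_{k}}^{-1}A$ if and only if $h\in E\cap g_{1}^{-1}E\cap\cdots\cap g_{k}^{-1}E$. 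The key structural feature is that $A$ and each $T_{g_{i}}^{-1}A$ are clopen (as each $T_{g_{i}}$ is a homeomorphism), so $B:=A\cap T_{g_{1}}^{-1}A\cap\cdots\cap T_{g_{k}}^{-1}A$ is clopen and $\mathbf{1}_{B}$ is continuous.

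Next I would produce the measure by averaging point masses along the F\o lner sets. Define the empirical probability measures
\[\mu_{n}=\frac{1}{|I_{n}|}\sum_{h\in I_{n}}\delta_{T_{h}\xi}.\]
By the dictionary above $\mu_{n}(B)=|I_{n}\cap E\cap g_{1}^{-1}E\cap\cdots\cap g_{k}^{-1}E|/|I_{n}|$ and in particular $\mu_{n}(A)=|I_{n}\cap E|/|I_{n}|$. First pass to a subsequence along which $\mu_{n}(A)\to\overline{d}_{\{I_{n}\}}(E)$, and then, using that $X$ is compact metrizable so that its space of Borel probability measures is weak$^{*}$ sequentially compact, thin out further to get a weak$^{*}$ limit $\mu=\lim_{j}\mu_{n_{j}}$. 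Since $\mathbf{1}_{A}$ and $\mathbf{1}_{B}$ are continuous, weak$^{*}$ convergence gives $\mu(A)=\lim_{j}\mu_{n_{j}}(A)=\overline{d}_{\{I_{n}\}}(E)$ and $\mu(B)=\lim_{j}\mu_{n_{j}}(B)\leq\limsup_{n}\mu_{n}(B)=\overline{d}_{\{I_{n}\}}(E\cap g_{1}^{-1}E\cap\cdots\cap g_{k}^{-1}E)$; the one-sided inequality is exactly what the theorem asks for, and it is forced because we only control the limit along the chosen subsequence, not the $\limsup$ over the full sequence.

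It remains to check that $\mu$ is invariant under every $T_{g}$, which is the one place the F\o lner hypothesis is essential and is the main obstacle. For continuous $f$ and $g\in G$, reindexing $h\mapsto gh$ gives
\[\int f\circ T_{g}\,d\mu_{n}-\int f\,d\mu_{n}=\frac{1}{|I_{n}|}\Bigl(\sum_{h\in gI_{n}}f(T_{h}\xi)-\sum_{h\in I_{n}}f(T_{h}\xi)\Bigr),\]
whose absolute value is at most $\|f\|_{\infty}\,|gI_{n}\triangle I_{n}|/|I_{n}|$, and the left F\o lner property drives this to $0$. Passing to the limit along $n_{j}$ yields $\int f\circ T_{g}\,d\mu=\int f\,d\mu$ for all continuous $f$, hence $(T_{g})_{*}\mu=\mu$, so $(X,\mathcal{B},\mu,(T_{g})_{g\in G})$ is measure-preserving. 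This gives a system with $\mu(A)=\overline{d}_{\{I_{n}\}}(E)$ and the displayed density inequality. The two subtle points to get right are the order of the two extractions (so that $\mu(A)$ genuinely realizes the $\limsup$) and verifying that the averaging error is governed precisely by the left translates $gI_{n}$ appearing in the \emph{left} F\o lner condition.
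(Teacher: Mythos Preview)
Your proof is correct and is the classical Furstenberg construction. Note, however, that the paper does not give its own proof of this statement: it is quoted in the introduction as a known result of Bergelson, and the paper's contribution is the general Theorem~\ref{main}, of which this is (essentially) a special case with $S=G$, $X=\{0,1\}$, and $E=\chi_{\hat E}$.

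Comparing your argument to the paper's Furstenberg-style proof of Theorem~\ref{main} (Section~3), the underlying structure is the same: take $Y=X^{S}=\{0,1\}^{G}$ with the shift action, and produce an invariant measure from the orbit of $\xi=\chi_{E}$ along the F\o lner sequence. The differences are in packaging. You invoke weak$^{*}$ sequential compactness of probability measures on the compact metrizable space $\{0,1\}^{G}$ and test only against continuous functions (the clopen cylinders); the paper instead defines the finitely additive $\rho$ on the cylinder algebra by diagonalization, passes to an inner-regular $\nu$, and verifies $\sigma$-additivity by hand via compactness. Your route is shorter because it outsources this to Riesz--Banach--Alaoglu; the paper's route is more self-contained. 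You also handle one point that the general theorem by itself does not: Theorem~\ref{main} only yields $\liminf\le \mu(A)\le \limsup$, so to force the \emph{equality} $\mu(A)=\overline{d}_{\{I_n\}}(E)$ you correctly first pass to a subsequence of $\{I_n\}$ along which $|I_n\cap E|/|I_n|$ realizes the $\limsup$, and only then extract the weak$^{*}$ limit. The order of these two extractions, which you flag, is indeed the one nontrivial bookkeeping step beyond the general machinery.
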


Recently there has been a renewed interest, both in Szemer\'edi's Theorem and in the passage between statements about sets of integers and statements about dynamical systems.  In particular, Elek and Szegedy \cite{ES} and Tao \cite{TaoHypergraph} developed new variations on the Furstenberg correspondence for graphs, rather than sets, and used them to provide proofs of the hypergraph regularity lemma using ergodic methods.  Tao also applied the Furstenberg correspondence ``backwards'' to settle an open problem in ergodic theory regarding the convergence of ``diagonal'' ergodic averages of commuting transformations \cite{Tao}.  The author gave a more purely ergodic proof of the same theorem \cite{TowsnerNormConvergence}, but still required two new variations on the Furstenberg correspondence; one in which the set $E$ is replaced by a function $e:\mathbb{Z}\rightarrow [0,1]$, and one in which a discrete average of $L^\infty$ functions on a measure space is replaced by an integral over a product space.

In this paper, we distill the common theme from all these correspondences to give a single general theorem subsuming all these cases.  We give two proofs, first a conventional one in the style of Furstenberg's original proof, and then a proof using techniques of nonstandard analysis.

\section{A Generalized Correspondence}
\begin{definition}
  Let $G$ be a semigroup, and let $\{I_n\}$ be a sequence of finite subsets of $G$.  The sequence $\{I_n\}$ is a (left) F{\o}lner sequence if for each $g\in G$ and each $\epsilon>0$, there is some $M_{g,\epsilon}$ such that whenever $n\geq M_{g,\epsilon}$,
\[\frac{|I_n\bigtriangleup g\cdot I_n|}{|I_n|}<\epsilon\]
\end{definition}

Semigroups for which F{\o}lner sequences exist are precisely the countable amenable semigroups.

\begin{theorem}
  Let $S$ be a countable set and let $G$ be a semigroup acting on $S$.  Let $X$ be a second countable compact space.  Let $E:S\rightarrow X$ be given, and let $\{I_n\}$ be a F{\o}lner sequence of subsets of $G$.

  Then there are a dynamical system $(Y,\mathcal{B},\nu,(T_g)_{g\in G})$ and measurable functions (with respect to the Borel sets generated by the topology on $X$) $\tilde E_{s}:Y\rightarrow X$ for each $s\in S$ such that the following hold:
  \begin{itemize}
  \item For any $g,s$, $\tilde E_{gs}=\tilde E_s\circ T_g$
  \item For any integer $k$, any continuous function $u:X^k\rightarrow \mathbb{R}$, and any finite sequence $s_1,\ldots, s_k$,
\begin{align*}
\liminf_{n\rightarrow\infty}\frac{1}{|I_n|}\sum_{g\in I_n}u(E(g s_1),\ldots,E(gs_k))&\leq\int  u(\tilde E_{s_1},\ldots,\tilde E_{s_k})d\nu\\
&\leq\limsup_{n\rightarrow\infty}\frac{1}{|I_n|}\sum_{g\in I_n}u(E(g s_1),\ldots,E(gs_k))
\end{align*}
\end{itemize}
\label{main}
\end{theorem}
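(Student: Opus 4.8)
The plan is to build the dynamical system $(Y,\mathcal{B},\nu,(T_g)_{g\in G})$ directly out of the orbit structure of $E$, using $X^S$ as the model space. First I would let $Y=X^S$, the space of all functions $S\to X$, equipped with the product topology; since $X$ is second countable compact, $Y$ is compact (and metrizable), and it carries the natural coordinate projections $\pi_s:Y\to X$. The semigroup action of $G$ on $S$ induces a shift action on $Y$: for $g\in G$ and $\omega\in Y$, define $(T_g\omega)(s)=\omega(gs)$. One checks immediately that each $T_g$ is continuous and that $T_g\circ T_h=T_{gh}$, matching the action on $S$. The candidate functions are $\tilde E_s=\pi_s$, the $s$-th coordinate projection, which are continuous hence Borel measurable. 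With these definitions the equivariance identity $\tilde E_{gs}=\tilde E_s\circ T_g$ is automatic: $(\pi_s\circ T_g)(\omega)=(T_g\omega)(s)=\omega(gs)=\pi_{gs}(\omega)$. So the entire structural content of the theorem reduces to producing the right $T_g$-invariant measure $\nu$ on $Y$.

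To produce $\nu$, I would push $E$ into $Y$ and average. Define a point $\Phi:G\to Y$ by $\Phi(g)=(s\mapsto E(gs))$, so that $\pi_s(\Phi(g))=E(gs)$. For each $n$ form the empirical measure $\mu_n=\frac{1}{|I_n|}\sum_{g\in I_n}\delta_{\Phi(g)}$ on the compact metrizable space $Y$. The key step is to extract a weak-$*$ limit: since $Y$ is compact metrizable, the space of Borel probability measures on $Y$ is weak-$*$ compact and metrizable, so $\{\mu_n\}$ has a convergent subsequence $\mu_{n_j}\to\nu$. For a fixed continuous $u:X^k\to\mathbb{R}$ and fixed $s_1,\dots,s_k$, the function $\omega\mapsto u(\pi_{s_1}(\omega),\dots,\pi_{s_k}(\omega))$ is continuous on $Y$, and evaluating it against $\mu_n$ gives exactly the average $\frac{1}{|I_n|}\sum_{g\in I_n}u(E(gs_1),\dots,E(gs_k))$. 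Passing to the limit along the subsequence, $\int u(\tilde E_{s_1},\dots,\tilde E_{s_k})\,d\nu$ equals $\lim_j$ of this average, which lies between its $\liminf$ and $\limsup$, giving the desired sandwich inequality.

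The remaining—and main—obstacle is showing $\nu$ is $T_g$-invariant for every $g\in G$; this is precisely where the F\o lner property is used. For a continuous test function $f$ on $Y$ I would compare $\int f\,d\mu_n$ and $\int f\circ T_g\,d\mu_n$. Writing these out, $\int (f\circ T_g - f)\,d\mu_n=\frac{1}{|I_n|}\sum_{h\in I_n}\bigl(f(T_g\Phi(h))-f(\Phi(h))\bigr)$, and I would relate $T_g\Phi(h)$ to $\Phi$ evaluated at a shifted argument so that the sum telescopes up to boundary terms controlled by $|I_n\bigtriangleup g\cdot I_n|/|I_n|$. The technical care here is that the action is on $S$ rather than $G$ itself, so I must track how $T_g$ on $Y$ interacts with $\Phi$; the cleanest route is to verify $T_g\Phi(h)=\Phi(\text{something involving } gh)$ is \emph{not} literally true in general, so instead I would bound the discrepancy directly using continuity of $f$ (uniform continuity on compact $Y$) together with the F\o lner estimate that the symmetric difference is asymptotically negligible. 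Taking the limit along the chosen subsequence then forces $\int f\circ T_g\,d\nu=\int f\,d\nu$ for all continuous $f$, hence $\nu$ is $T_g$-invariant. Invariance for all $g$ simultaneously can be arranged by a diagonal argument over the countable semigroup $G$ when extracting the subsequence.
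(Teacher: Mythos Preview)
Your framework coincides with the paper's Furstenberg-style proof: both take $Y=X^S$ with the product topology, the shift action $(T_g\omega)(s)=\omega(gs)$, and $\tilde E_s=\pi_s$; the equivariance $\tilde E_{gs}=\tilde E_s\circ T_g$ is then immediate in both. Where you diverge is in manufacturing $\nu$. The paper works by hand: it fixes a countable algebra $\mathcal{C}$ generated by cylinder sets, diagonalizes to make the empirical frequencies $\rho(C)$ converge for every $C\in\mathcal{C}$, then regularizes $\rho$ to a genuine $\sigma$-additive $\nu$ through several lemmas before finally checking the integral inequality via simple-function approximation. Your route---passing to a weak-$*$ limit of the empirical measures $\mu_n=\frac{1}{|I_n|}\sum_{g\in I_n}\delta_{\Phi(g)}$ on the compact metrizable space $Y$---is shorter and more standard; it packages the paper's $\rho$-to-$\nu$ lemmas into a single appeal to Riesz representation and Banach--Alaoglu. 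The paper's version is more self-contained and also yields a second, nonstandard-analysis proof, but your argument is perfectly valid.

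One correction: your worry in the last paragraph is misplaced. By associativity of the $G$-action on $S$,
\[
(T_g\Phi(h))(s)=\Phi(h)(gs)=E(h(gs))=E((hg)s)=\Phi(hg)(s),
\]
so $T_g\Phi(h)=\Phi(hg)$ holds \emph{exactly}. Thus $\int f\circ T_g\,d\mu_n=\frac{1}{|I_n|}\sum_{h\in I_n}f(\Phi(hg))$, and comparing with $\int f\,d\mu_n$ the discrepancy is bounded by $2\|f\|_\infty\cdot|I_n\bigtriangleup I_n g|/|I_n|$, which the F\o lner condition sends to zero. No uniform-continuity argument is needed, and since this holds for every $g$ before passing to the limit, any weak-$*$ cluster point of $\{\mu_n\}$ is automatically $T_g$-invariant for all $g$ simultaneously---so the diagonal argument over $G$ you propose is also unnecessary.
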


To illustrate this, we give six special cases; the first five have been proven separately, while the sixth, as far as we know, is novel.

\begin{corollary}[\cite{Furst77},\cite{FurstenbergBook}]
  Let $\hat E\subseteq \mathbb{Z}$ with positive upper Banach density be given.  Then there are a dynamical system $(Y,\mathcal{B},\mu,T)$ and a set $A\in\mathcal{B}$ with $\mu(A)>0$ such that for any finite set of integers $U$, the upper Banach density of $\bigcap_{n\in U} (E-n)$ is at least $\mu(\bigcap_{n\in U}T^n A)$.
\end{corollary}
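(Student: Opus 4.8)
The plan is to derive this corollary directly from Theorem \ref{main} by making the appropriate substitutions. First I would set up the data required by the theorem: take $S = \mathbb{Z}$ and let $G = \mathbb{Z}$ act on itself by translation, so that the action $g \cdot s = g + s$. To encode the set $\hat E$ as a map into a compact space, let $X = \{0,1\}$ with the discrete topology, which is second countable and compact, and define $E : \mathbb{Z} \to X$ to be the indicator function $E = \mathbf{1}_{\hat E}$. The F\o lner sequence $\{I_n\}$ should be chosen to witness the positive upper Banach density of $\hat E$; that is, pick a sequence of intervals $I_n$ along which the density $\frac{|\hat E \cap I_n|}{|I_n|}$ converges to the upper Banach density $\overline{d}(\hat E) > 0$. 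This is a genuine F\o lner sequence for $\mathbb{Z}$ since intervals satisfy the F\o lner condition.

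With this setup, Theorem \ref{main} furnishes a dynamical system $(Y,\mathcal{B},\nu,(T_g)_{g\in\mathbb{Z}})$ and measurable functions $\tilde E_s : Y \to \{0,1\}$ satisfying the equivariance $\tilde E_{g+s} = \tilde E_s \circ T_g$. Next I would convert the functions back into a set. Let $A = \tilde E_0^{-1}(\{1\})$, the preimage of $1$ under $\tilde E_0$; since $X$ is discrete and $\tilde E_0$ is measurable, $A \in \mathcal{B}$. The equivariance relation then gives $\tilde E_n = \tilde E_0 \circ T_n$, so that $\tilde E_n^{-1}(\{1\}) = T_n^{-1}A$; writing $T = T_1$ and using $T_n = T^n$ in the group $\mathbb{Z}$, one should match the indexing conventions (translation versus inverse translation) so that $T^n A$ in the corollary corresponds to the appropriate preimage.

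To extract the measure and density inequalities, I would apply the continuous-function inequality of Theorem \ref{main} to suitable test functions $u$. For the claim $\mu(A) > 0$, take $k=1$, $s_1 = 0$, and let $u : \{0,1\} \to \mathbb{R}$ be the identity (projection onto the value), so that $u(E(g)) = \mathbf{1}_{\hat E}(g)$; the averages are exactly $\frac{|\hat E \cap I_n|}{|I_n|}$, which converge to $\overline{d}(\hat E) > 0$, and the sandwich inequality forces $\nu(A) = \int \tilde E_0 \, d\nu = \overline{d}(\hat E) > 0$. For the main inequality, fix a finite set $U = \{n_1,\ldots,n_k\}$, take $s_i = n_i$, and let $u : \{0,1\}^k \to \mathbb{R}$ be the product $u(x_1,\ldots,x_k) = \prod_i x_i$, which is continuous on the discrete space $X^k$. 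Then $u(E(g+n_1),\ldots,E(g+n_k)) = \mathbf{1}_{\bigcap_i (\hat E - n_i)}(g)$, so the upper average is the upper Banach density of $\bigcap_{n\in U}(\hat E - n)$, while $\int u(\tilde E_{n_1},\ldots,\tilde E_{n_k})\,d\nu = \nu\bigl(\bigcap_{n\in U} T_n^{-1}A\bigr)$. The $\limsup$ side of the theorem's inequality then yields the desired bound.

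The main obstacle I anticipate is bookkeeping rather than conceptual: I must reconcile the sign and indexing conventions, since the corollary is phrased with $E - n$ and $T^n A$ (right translation / forward iteration) whereas the general theorem is naturally stated with the action $gs$ and compositions $\tilde E_s \circ T_g$. Care is needed to verify that the $\limsup$ along the chosen F\o lner sequence $\{I_n\}$ genuinely equals the upper Banach density; strictly, the upper Banach density is a supremum over \emph{all} interval sequences, so one should either observe that the theorem's $\limsup$ along our particular sequence is a lower bound for the upper Banach density (which suffices for the stated ``at least'' inequality), or choose $\{I_n\}$ to nearly realize the Banach density. Once these conventions are pinned down, the inequality of Theorem \ref{main} delivers the corollary immediately.
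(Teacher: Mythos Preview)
Your proposal is correct and follows essentially the same route as the paper: both apply Theorem~\ref{main} with $G=S=\mathbb{Z}$, $X=\{0,1\}$, $E=\chi_{\hat E}$, and a F{\o}lner sequence of intervals witnessing the upper Banach density, then take the product function $u(b_1,\ldots,b_k)=\prod_i b_i$ as the continuous test function. Your write-up is in fact more explicit than the paper's (you spell out the definition of $A$ as $\tilde E_0^{-1}(\{1\})$ and separately verify $\mu(A)>0$ via the $k=1$ case), and your cautionary remarks about sign conventions and about the chosen $\{I_n\}$ only lower-bounding the true Banach density are well taken, but none of this constitutes a different approach.
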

\begin{proof}
  Apply Theorem \ref{main} by letting $G=S=\mathbb{Z}$ and $E:=\chi_{\hat E}:\mathbb{Z}\rightarrow\{0,1\}$.  Let the sequence $\{I_n\}$ be a sequence of intervals witnessing the positive upper Banach density of $E$.

  This gives a dynamical system $(Y,\mathcal{B},\mu,\mathbb{Z})$.  Let $T$ be the action of $1$ on $Y$.  Set $\tilde E:=\tilde E_1$, so $\tilde E_n=\tilde E\circ T^n$ for each $n$.  Then for any $k$, the function $u:\{0,1\}^k\rightarrow\mathbb{R}$ given by
\[u(b_1,\ldots,b_k):=\prod_{i\leq k} b_i\]
is continuous, so the upper Banach density of $\bigcap_{n\in U} (E-n)$ is bounded below by $\int \prod_{n\in U}\tilde E_{n}d\mu=\int \prod_{n\in U}\tilde E\circ T^n d\mu$.
\end{proof}

By similar arguments:
\begin{corollary}[\cite{FKO},\cite{FurstenbergBook}]
  Let $\hat E\subseteq \mathbb{Z}^k$ with positive upper Banach density be given.  Then there are a dynamical system $(Y,\mathcal{B},\mu,T_1,\ldots,T_k)$ and a set $A\in\mathcal{B}$ with $\mu(A)>0$ such that for any finite set of tuples $U$, the upper Banach density of $\bigcap_{\vec n\in U} (E-\vec n)$ is at least $\mu(\bigcap_{\vec n\in U}T_1^{n_1}\cdots T^{n_k}_k A)$.
\end{corollary}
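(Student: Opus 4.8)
The plan is to apply Theorem \ref{main} in direct analogy with the previous corollary, taking $G=S=\mathbb{Z}^k$ acting on itself by translation (so that the action of $g$ sends $s$ to $g+s$). I would set $E:=\chi_{\hat E}:\mathbb{Z}^k\to\{0,1\}$, where $X=\{0,1\}$ is finite and hence compact, and let $\{I_n\}$ be a sequence of boxes (products of intervals) witnessing the positive upper Banach density of $\hat E$; concretely, I would choose the $I_n$ so that $\lim_n |\hat E\cap I_n|/|I_n|$ exists and equals the upper Banach density of $\hat E$. Theorem \ref{main} then yields a dynamical system $(Y,\mathcal{B},\mu,(T_g)_{g\in\mathbb{Z}^k})$ together with measurable functions $\tilde E_{\vec m}:Y\to\{0,1\}$.

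Next I would extract the commuting transformations. Writing $e_1,\ldots,e_k$ for the standard generators of $\mathbb{Z}^k$, set $T_i:=T_{e_i}$. Since $\mathbb{Z}^k$ is abelian and $T_g\circ T_h=T_{g+h}$, the $T_i$ commute and $T_{\vec n}=T_1^{n_1}\cdots T_k^{n_k}$ for every $\vec n=(n_1,\ldots,n_k)$. I would put $\tilde E:=\tilde E_{\vec 0}$ and $A:=\{y:\tilde E(y)=1\}\in\mathcal{B}$. The first bullet of Theorem \ref{main} gives $\tilde E_{\vec n}=\tilde E_{\vec 0}\circ T_{\vec n}=\tilde E\circ T_1^{n_1}\cdots T_k^{n_k}$, so $\tilde E_{\vec n}=\chi_A\circ T_{\vec n}$.

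Now fix a finite set of tuples $U=\{\vec n_1,\ldots,\vec n_\ell\}$. The product map $u(b_1,\ldots,b_\ell):=\prod_{i\le\ell}b_i$ on $\{0,1\}^\ell$ is continuous, and for each $g$ one has $u(E(g+\vec n_1),\ldots,E(g+\vec n_\ell))=\prod_{\vec n\in U}\chi_{\hat E}(g+\vec n)$, which is the indicator of $g\in\bigcap_{\vec n\in U}(\hat E-\vec n)$. Hence the average $\frac{1}{|I_n|}\sum_{g\in I_n}u(\cdots)$ equals $|I_n\cap\bigcap_{\vec n\in U}(\hat E-\vec n)|/|I_n|$, whose $\limsup$ is at most the upper Banach density of $\bigcap_{\vec n\in U}(\hat E-\vec n)$ (the density along a single box sequence never exceeds the supremum over all of them). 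On the other side, $\int u(\tilde E_{\vec n_1},\ldots,\tilde E_{\vec n_\ell})\,d\mu=\int\prod_{\vec n\in U}\chi_A\circ T_{\vec n}\,d\mu=\mu\big(\bigcap_{\vec n\in U}T_{\vec n}^{-1}A\big)$. Combining these with the upper inequality of the second bullet of Theorem \ref{main} yields the desired bound, while taking $U=\{\vec 0\}$ with the lower inequality and the choice of $\{I_n\}$ gives $\mu(A)=\overline{d}_{\{I_n\}}(\hat E)>0$.

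I do not expect a substantive obstacle here: the content is carried entirely by Theorem \ref{main}, and the verification above is bookkeeping. The one point that requires care is the indexing convention relating the $T_{\vec n}^{-1}A$ produced above to the $T_1^{n_1}\cdots T_k^{n_k}A$ appearing in the statement, which is handled exactly as in the previous corollary (by fixing the convention for the action, or equivalently by replacing $\hat E$ with its reflection); the other is ensuring $\mu(A)>0$, which is precisely why I insist on choosing the box sequence $\{I_n\}$ so that the density of $\hat E$ along it converges to the full upper Banach density rather than merely being bounded below by it.
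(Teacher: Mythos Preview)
Your proposal is correct and follows exactly the approach the paper intends: the paper's own proof is simply the phrase ``By similar arguments,'' referring back to the $\mathbb{Z}$ case, and your write-up is precisely that argument carried out with $G=S=\mathbb{Z}^k$, $X=\{0,1\}$, $E=\chi_{\hat E}$, and a F{\o}lner sequence of boxes witnessing the upper Banach density. The two points you flag (the $T_{\vec n}^{-1}A$ versus $T_1^{n_1}\cdots T_k^{n_k}A$ convention, and the choice of $\{I_n\}$ to guarantee $\mu(A)>0$) are the same bookkeeping issues already present in the one-dimensional corollary and are handled identically.
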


The following corollary is implicit in the more complicated one given in Section 2 of \cite{TowsnerNormConvergence}.
\begin{corollary}
  Let $f:\mathbb{Z}\rightarrow[-1,1]$ be given.  Then there are a dynamical system $(Y,\mathcal{B},\mu,T)$ and a function $F\in L^\infty(Y)$ such that for any finite set of integers $U$,
\[\liminf_{N\rightarrow\infty}\frac{1}{N}\sum_{n=1}^N\prod_{k\in U}f(n+k)\leq\int \prod_{k\in U}F\circ T^kd\mu\leq\limsup_{N\rightarrow\infty}\frac{1}{N}\sum_{n=1}^N\prod_{k\in U}f(n+k)\]
\end{corollary}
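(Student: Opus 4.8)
The plan is to obtain this as a direct specialization of Theorem \ref{main}, paralleling the proof of the first corollary but now exploiting the full two-sided conclusion rather than just a one-sided density bound. First I would set $G = S = \mathbb{Z}$, with $G$ acting on $S$ by translation, $g \cdot s := g + s$. I would take $X = [-1,1]$, which is a second countable compact (metric) space, and let $E := f : \mathbb{Z} \to [-1,1]$. For the F{\o}lner sequence I would use the intervals $I_N := \{1, \ldots, N\}$; these form a F{\o}lner sequence of $\mathbb{Z}$ since $|I_N \bigtriangleup (g + I_N)|/|I_N| = 2|g|/N \to 0$ for each fixed $g$.

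Applying Theorem \ref{main} yields a dynamical system $(Y, \mathcal{B}, \nu, (T_g)_{g \in \mathbb{Z}})$ together with measurable functions $\tilde E_s : Y \to [-1,1]$. I would then set $T := T_1$ and $F := \tilde E_0$, and take $\mu := \nu$. Since $F$ is measurable with values in the bounded set $[-1,1]$, it lies in $L^\infty(Y)$. The compatibility condition $\tilde E_{gs} = \tilde E_s \circ T_g$ specializes, with $s = 0$, to $\tilde E_k = \tilde E_0 \circ T_k = F \circ T^k$ for every $k \in \mathbb{Z}$.

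To finish, given a finite set $U = \{k_1, \ldots, k_m\} \subseteq \mathbb{Z}$, I would apply the second clause of the theorem with $s_i := k_i$ and with the continuous function $u : [-1,1]^m \to \mathbb{R}$ defined by $u(b_1, \ldots, b_m) := \prod_{i \le m} b_i$. Because $E(g \cdot k_i) = f(g + k_i)$, the average over $g \in I_N = \{1, \ldots, N\}$ becomes $\frac{1}{N} \sum_{n=1}^N \prod_{k \in U} f(n+k)$, while the integral becomes $\int \prod_{k \in U} \tilde E_k \, d\nu = \int \prod_{k \in U} F \circ T^k \, d\mu$; the theorem's liminf/limsup sandwich is then precisely the claimed inequality.

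There is essentially no hard step here: the content lives entirely in Theorem \ref{main}, and what remains is bookkeeping, namely matching the semigroup action to the translation $n \mapsto n+k$ and checking that $[-1,1]$ and the product map meet the hypotheses. The only point requiring any care — and the one genuinely different from the indicator-function corollaries — is that $f$ may take negative values, so one cannot recover a one-sided statement by monotonicity; fortunately the theorem already delivers the full two-sided bound for arbitrary continuous $u$, so this causes no difficulty.
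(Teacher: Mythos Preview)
Your proposal is correct and is exactly the argument the paper has in mind: the paper does not give an explicit proof of this corollary, merely indicating that it follows by the same reasoning as the first corollary (now with $X=[-1,1]$ and $E=f$ in place of $X=\{0,1\}$ and $E=\chi_{\hat E}$), which is precisely what you have written out. The only cosmetic difference is that the paper's first corollary sets $\tilde E:=\tilde E_1$ whereas you take $F:=\tilde E_0$, but this is immaterial.
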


\begin{corollary}[\cite{TowsnerNormConvergence}]
  Let $(Z,\mathcal{B},\mu)$ be a separable measure space, and for each $s\leq k$, let $(Z_s,\mathcal{B}_s,\mu_s)$ be a factor.  Let a real number $b$ be given, and for $s\leq k$, let $\{f_{s,n}\}_{i\in\mathbb{N}}$ be a sequence of $L^\infty(Z_s,\mathcal{B}_s,\mu_s)$ functions almost everywhere bounded by $b$.  Let $g$ be a weak limit point of the sequence $\frac{1}{N}\sum_{n=0}^{N-1}\prod_{s\leq k}f_{s,n}$ as $N$ goes to infinity.

Then there are a measure space $(Y,\mathcal{C},\nu)$ and functions $\tilde f_s\in L^\infty(Z_s\times Y)$ such that $\int \prod\tilde f_sd\nu$ is $g$.
\end{corollary}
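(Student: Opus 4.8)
The plan is to feed Theorem~\ref{main} not the functions $f_{s,n}$ themselves but the joint distributions of their values, so that the product appearing in $g$ becomes a \emph{continuous} functional on the parameter space; the dynamical system produced by Theorem~\ref{main} then records the limiting distribution, from which honest functions $\tilde f_s$ are recovered by an auxiliary randomization.

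First I would normalize $\mu$ to a probability measure and fix a sequence $\{\psi_j\}_{j\in\mathbb N}\subseteq L^\infty(Z)$ with $\|\psi_j\|_\infty\le 1$ whose span is dense in $L^2(Z)$ and which generates $\mathcal B$ modulo null sets, chosen so that a sub-family generates each factor $\mathcal B_s$. Since $Z$ is separable, $L^1(Z)$ is separable, so the weak limit point $g$ is a genuine limit along some subsequence $N_i\to\infty$; I put $G=S=\mathbb N$ acting on itself by translation and $I_i=\{0,\dots,N_i-1\}$, a F\o lner sequence. Let $\hat K=[-b,b]^k\times[-1,1]^{\mathbb N}$, a compact metrizable space, and encode time $n$ by the pushforward $\hat\kappa_n\in X:=\mathcal P(\hat K)$ (with the weak-$*$ topology, again compact metrizable) of $\mu$ under $z\mapsto\big((f_{s,n}(z))_{s\le k},(\psi_j(z))_j\big)$; set $E(n)=\hat\kappa_n$. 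Applying Theorem~\ref{main} with the single basepoint $s_1=0$ yields $(Y,\mathcal C,\nu)$ and a measurable map $y\mapsto\kappa_y:=\tilde E_0(y)\in\mathcal P(\hat K)$.

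The point of this encoding is that for each $j$ the functional $v_j(\kappa)=\int_{\hat K}\big(\prod_{s}x_s\big)\,\xi_j\,d\kappa$, where $x_s,\xi_j$ are the coordinate functions on $\hat K$, is \emph{continuous} on $X$, and $v_j(\hat\kappa_n)=\int_Z\big(\prod_s f_{s,n}\big)\psi_j\,d\mu$. Since $\frac1{N_i}\sum_{n<N_i}v_j(\hat\kappa_n)=\int_Z\psi_j\big(\frac1{N_i}\sum_{n<N_i}\prod_s f_{s,n}\big)\,d\mu\to\int_Z\psi_j\,g\,d\mu$, both the $\liminf$ and the $\limsup$ in Theorem~\ref{main} equal $\int_Z\psi_j\,g\,d\mu$, whence $\int_Y v_j(\kappa_y)\,d\nu=\int_Z\psi_j\,g\,d\mu$ for every $j$. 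To produce functions, note that the $[-1,1]^{\mathbb N}$-marginal of each $\hat\kappa_n$ is the fixed law $\rho$ of $z\mapsto(\psi_j(z))_j$, a weak-$*$ closed condition, so $\kappa_y$ has this marginal for a.e.\ $y$; disintegrating gives $\kappa_y=\int(\kappa_y)_w\,d\rho(w)$ with $(\kappa_y)_w\in\mathcal P([-b,b]^k)$, while $\pi(z)=(\psi_j(z))_j$ identifies $(Z,\mu)\cong([-1,1]^{\mathbb N},\rho)$. I then enlarge to $Y'=Y\times[0,1]$, $\nu'=\nu\times\mathrm{Leb}$, choose a jointly measurable family $G_{y,w}\colon[0,1]\to[-b,b]^k$ with $(G_{y,w})_*\mathrm{Leb}=(\kappa_y)_w$, and set $\tilde f_s\big(z,(y,\omega)\big)=\big(G_{y,\pi(z)}(\omega)\big)_s$. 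Because for fixed $(z,y)$ the sampled point is a \emph{single} element of $[-b,b]^k$, its coordinates multiply honestly, and a Fubini computation with the disintegration yields $\int_Z\psi_j\big(\int_{Y'}\prod_s\tilde f_s\,d\nu'\big)\,d\mu=\int_Y v_j(\kappa_y)\,d\nu=\int_Z\psi_j\,g\,d\mu$; totality of $\{\psi_j\}$ in $L^2$ then forces $\int_{Y'}\prod_s\tilde f_s\,d\nu'=g$.

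The main obstacle is exactly the tension driving this construction. Multiplication is \emph{not} continuous for the weak-$*$ topology on bounded sets of $L^\infty$ (e.g.\ $\sin(2\pi n x)\rightharpoonup 0$ while $\sin^2(2\pi n x)\rightharpoonup\tfrac12$), so one cannot simply take $\tilde f_s(z,y)=[\tilde E_s(y)](z)$ with $\tilde E_s$ valued in an $L^\infty$-ball: the resulting $\nu$ would see only the weak-$*$ limit and report the wrong product. The distribution encoding converts the product into the continuous functional $v_j$, and the randomization by $[0,1]$ restores genuine functions realizing the limiting conditional law. The remaining delicate point, which I would treat last, is checking that each $\tilde f_s$ lies in $L^\infty(Z_s\times Y')$, i.e.\ depends on $z$ only through its $\mathcal B_s$-class. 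Under each $\hat\kappa_n$ the coordinate $x_s$ equals the $\mathcal B_s$-measurable $f_{s,n}$, hence is a function of $w_s$ alone; the difficulty is that this determinism is lost in the limit, so the sampling $G_{y,w}$ must be arranged so that its $s$-th coordinate is a factor-measurable function of $w_s$ and a shared variable while still reproducing the mixed moment $\int\prod_s x_s\,d(\kappa_y)_w$. I expect this common-randomness representation of $\kappa_y$ — implicit in the fact that $\kappa_y$ is a limit of empirical measures of factor-measurable functions with a common time index — to be the genuinely technical heart of the argument.
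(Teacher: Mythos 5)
Your positive steps are sound, and they constitute a genuinely different route from the paper's. Encoding time $n$ by the joint law $\hat\kappa_n$ of $\bigl((f_{s,n}(z))_s,(\psi_j(z))_j\bigr)$ makes the mixed moments $v_j$ honestly continuous on $X=\mathcal{P}(\hat K)$, Theorem~\ref{main} then really does yield $\int_Y v_j(\kappa_y)\,d\nu=\int_Z\psi_j g\,d\mu$ for every $j$, and the disintegration-plus-randomization step correctly produces bounded, jointly measurable $\tilde f_s$ on $Z\times Y'$ with $\int\prod_s\tilde f_s\,d\nu'=g$. Moreover, your ``main obstacle'' paragraph is not merely motivational: it is a correct refutation of the naive encoding, and the paper's own proof \emph{is} that naive encoding. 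The paper takes $X$ to be the $b$-ball of $L^\infty(Z)$ in the weak${}^*$ topology, sets $\tilde f_s(z,y):=\tilde E_{s,0}(y)(z)$, and then asserts that $\int\prod_s\tilde f_s\,g_j\,d(\mu\times\nu)$ equals the limit of the discrete averages; that assertion would need the functional $(h_0,\dots,h_k)\mapsto\int\prod_s h_s g_j\,d\mu$ to be continuous on $X^{k+1}$, which your oscillation example disproves. Concretely, with $f_{0,n}=f_{1,n}=\sin(2\pi nx)$ the orbit points converge weak${}^*$ to $0$, so the system produced by the Furstenberg-style construction has $\tilde E_{s,0}=0$ almost surely and reports $0$, while $g=\tfrac12$. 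So on this axis your route is correct and the paper's argument has a real gap.

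However, your proof as written has the genuine gap you flag at the end, and it is not a removable technicality: the statement requires $\tilde f_s\in L^\infty(Z_s\times Y)$, whereas your $\tilde f_s\bigl(z,(y,\omega)\bigr)=\bigl(G_{y,\pi(z)}(\omega)\bigr)_s$ depends on $z$ through all of $\pi(z)$, i.e.\ through $\mathcal{B}$ rather than $\mathcal{B}_s$. You cannot repair this by projecting, since replacing $\tilde f_s$ with $\mathbb{E}[\tilde f_s\mid\mathcal{B}_s\otimes\mathcal{C}']$ destroys the identity $\int\prod_s\tilde f_s\,d\nu'=g$ (conditional expectation does not commute with products), and the structure that would rescue you --- that under each $\hat\kappa_n$ the coordinate $x_s$ is a deterministic function of the $\mathcal{B}_s$-coordinates of $w$ --- is exactly the kind of conditional determinism that weak limits in $\mathcal{P}(\hat K)$ destroy. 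What is needed is a limit object that remembers that the $k+1$ values were sampled at a common time: a representation $x_s=F_s(w_{(s)},u)$ with one shared auxiliary variable $u$, obtainable for instance from a Loeb-measure/ultraproduct construction over $Z\times\{0,\dots,N_t-1\}$ or from a duality argument for integral multilinear forms. That reconciliation is the actual content of the corollary, and it is missing from your write-up just as the dual step is missing from the paper's: the paper's encoding gets $\mathcal{B}_s$-measurability for free (the test functionals $\kappa\mapsto(\int\kappa h\,d\mu)^2$ with $h\perp L^2(Z_s)$ \emph{are} weak${}^*$-continuous) but fails on the product, while yours gets the product but not the factor-measurability. As it stands, you have proved the corollary only with $L^\infty(Z_s\times Y)$ weakened to $L^\infty(Z\times Y)$.
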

\begin{proof}
  Let $S$ be $[0,k]\times\mathbb{N}$, let $X$ be the set of $L^\infty(Z,\mathcal{B},\mu)$ functions with norm at most $b$, under the weak${}^*$ topology.  Let $G$ be $\mathbb{N}$, acting on $S$ by $n(s,m):=(s,m+n)$, let $N_t$ be a sequence of integers witnessing that $g$ is a weak limit point, and let $I_t:=[0,N_t]$.  Let $E(i,m):=f_{i,m}$.

  Fix some orthonormal basis $\{g_j\}$ for $L^2(Z)$.  Observe that for each $j$, the function $S_j$ given by $S_j(h):=\sum_{i\leq j}\int hg_jd\mu$ is continuous, and since each $f_{s,n}$ is almost everywhere bounded by $b$, it follows that for every $j$, $||S_j(f_{s,0})||_{L^2(Y)}\leq b$.  Therefore $||\sum_{i\leq j} g_i(z)\int \tilde E_{s,0}(y)g_jd\mu||\leq b$ for every $j$.  Then the infinite sum $\sum_i g_i(z)\int \tilde E_{s,0}(y)g_jd\mu$ is a convergent sum of functions measurable in $L^2(Z\times Y)$, and is therefore measurable.  We may then take this function to be represented by $\tilde f_s(z,y):=\tilde E_{s,0}(y)(z)$.

Next, observe that for each $g_j$, the function $\int\prod\tilde f_sg_jd\mu\times\nu$ is equal to \[\lim_{t\rightarrow\infty}\int\frac{1}{N_t}\sum_{n=0}^{N_t-1}\prod_{i\leq s}f_{i,n}g_jd\mu\]
(The limit exists since we have chosen the sequence $N_t$ to witness a particular limit point of the sequence.)

Finally, for each $s$, if $h$ is orthonormal to $(Z_s,\mathcal{B}_s,\mu_s)$, the set of $y$ such that $\int y_{s,0}hd\mu\neq 0$ has measure $0$, and so $\tilde f_s$ is measurable with respect to $(Z_s,\mathcal{B}_s,\mu_s)$.
\end{proof}

\begin{definition}
  If $K,H$ are finite graphs. define $t(K,H)$ to be the fraction of embeddings $\pi:|K|\rightarrow |H|$ such that $\pi$ is a graph embedding.
\end{definition}

\begin{corollary}[\cite{ES},\cite{Tao}]
  Let $H_n:=(V_n,E_n)$ be a sequence of finite graphs.  Then there are a measure space $(Y,\mathcal{B},\nu)$ and, for every finite graph $K:=(V,E)$, a function $\tilde V$ on $Y$ such that 
\[\liminf_{N\rightarrow\infty}\frac{1}{N}\sum_{n=1}^N t(K,H_n)\leq\int\tilde Vd\nu\leq\limsup_{N\rightarrow\infty}\frac{1}{N}\sum_{n=1}^N t(K,H_n)\]
\end{corollary}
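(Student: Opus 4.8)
The plan is to apply Theorem~\ref{main} with the semigroup and the acting set both taken to be $\mathbb{N}^2$, so that one coordinate handles the outer average over the sequence of graphs while the other handles the averaging of vertex samples inside a single graph. Concretely, let $G = S = \mathbb{N}^2$ act on itself by translation $(a,b)\cdot(n,r) = (n+a, r+b)$, and let $X = \{0,1\}^{\binom{\mathbb{N}}{2}}$, the space of graphs on the label set $\mathbb{N}$ with the product topology; this is a second countable compact space. For the function $E$ I would fix a sequence $p_n \to \infty$ and, for each $n$, enumerate periodically in the variable $r$ the injective maps $\phi\colon \{1,\dots,p_n\} \to V_{n+1}$; then set $E(n,r)$ to be the graph on $\mathbb{N}$ whose $\{i,j\}$-coordinate records whether $\phi(i)$ and $\phi(j)$ are adjacent in $H_{n+1}$ (and $0$ once $i$ or $j$ exceeds $p_n$). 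Finally take the F{\o}lner sequence $I_t = \{0,\dots,t-1\}\times\{0,\dots,M_t-1\}$, where $M_t \to \infty$ grows quickly enough to dwarf the enumeration periods of the first $t$ graphs.

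With these choices, the key point is that the product structure of $\mathbb{N}^2$ decouples the two averages. Given a finite graph $K = (V,E)$ with $|V| = m$, define the continuous function $u_K\colon X \to \mathbb{R}$ by $u_K(x) = \prod_{\{i,j\}\in E} x_{\{i,j\}}$ (for an induced-embedding reading one multiplies in the factors $1 - x_{\{i,j\}}$ over non-edges, which is again continuous). For each fixed $n$ with $p_n \ge m$, the average of $u_K(E(n,r))$ over $r \in \{0,\dots,M_t-1\}$ converges, as $M_t\to\infty$, to the uniform average of $u_K$ over all injective placements of $K$ into $H_{n+1}$, which is exactly $t(K,H_{n+1})$; averaging over the first coordinate then assigns each graph the equal weight $1/t$. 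Hence $\frac{1}{|I_t|}\sum_{g\in I_t} u_K(E(g))$ differs from $\frac{1}{t}\sum_{n=1}^{t} t(K,H_n)$ by an error tending to $0$. Applying Theorem~\ref{main} with $k = 1$, $s_1 = (0,0)$, and $u = u_K$, and setting $\tilde V := u_K(\tilde E_{(0,0)})$ --- a measurable function on $Y$ since $u_K$ is continuous and $\tilde E_{(0,0)}$ is measurable --- the conclusion of the theorem is precisely the desired sandwich between the $\liminf$ and $\limsup$ of $\frac{1}{N}\sum_{n=1}^N t(K,H_n)$. Note that a single application furnishes $(Y,\mathcal{B},\nu)$ and $\tilde E_{(0,0)}$ simultaneously for every $K$.

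The main obstacle, and the reason for the two-coordinate set-up, is reproducing the flat, equally weighted Ces\`aro average $\frac1N\sum_{n=1}^N t(K,H_n)$: representing the inner density $t(K,H_n)$ faithfully requires a number of samples growing with $|V_n|$, so a naive one-dimensional interleaving, or a disjoint-union model, would weight each graph in proportion to its size rather than equally. Splitting off a separate averaging coordinate resolves this, since the first-coordinate F{\o}lner average is automatically flat in $n$ while the second coordinate absorbs the variable sample counts. The remaining points I expect to be routine: choosing $p_n \to \infty$ so that every finite $K$ is eventually representable and the finitely many early blocks with $p_n < m$ (where necessarily $t(K,H_n)=0$) contribute a vanishing fraction; matching the precise definition of $t(K,H)$ (injective embeddings versus homomorphisms, and induced versus not) by enumerating injective tuples and inserting the appropriate continuous factors; and verifying that $\{I_t\}$ is genuinely a F{\o}lner sequence for $\mathbb{N}^2$, which holds for any $M_t \to \infty$ since $|I_t \triangle g\cdot I_t|/|I_t| \to 0$ for each fixed $g$.
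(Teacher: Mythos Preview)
Your two–coordinate construction is sound, but it is a genuinely different (and considerably more elaborate) route than the paper's. The paper simply takes $S=G=\mathbb{N}$ with $I_N=[1,N]$, lets $X$ be a compact space of ``graph types'' in which each $H_n$ sits as a single point (so $E(n)=H_n$), and observes that $u_K(H):=t(K,H)$ is continuous on $X$; the corollary then drops out of Theorem~\ref{main} with $k=1$ in one line, with no inner averaging at all. In other words, the paper treats the density $t(K,H_n)$ as the \emph{value} of a continuous observable at the single point $H_n$, rather than as something to be reconstructed by sampling vertices.

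What each approach buys: the paper's is dramatically shorter, since the statement only asks for the scalar averages $\frac1N\sum t(K,H_n)$ and nothing finer. Your unfolding via $\mathbb{N}^2$ and vertex samples is closer in spirit to the actual Elek--Szegedy/Tao constructions (it essentially builds a vertex-sampling model on $Y$, not just the homomorphism densities), and it has the side benefit of making the target space $X=\{0,1\}^{\binom{\mathbb{N}}{2}}$ completely explicit, whereas the paper's one-line proof leaves the compact topology on the space of finite graphs, and the continuity of $H\mapsto t(K,H)$, to the reader. A small technical point in your write-up: requiring $p_n\to\infty$ can conflict with $p_n\le |V_{n+1}|$ if the $H_n$ have bounded size; taking $p_n=|V_{n+1}|$ (or $\min(n,|V_{n+1}|)$) is the clean fix, since for $|V(K)|>p_n$ both $t(K,H_{n+1})$ and $u_K(E(n,r))$ vanish anyway.
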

\begin{proof}
  Let $S=G=\mathbb{N}$.  Let $X$ be the space of finite graphs, viewed as functions from finite subsets of $\mathbb{N}$ to $\{0,1\}$.  For any graph $K$, the function $u_K(H):=t(K,H)$ is continuous, so the result follows from Theorem \ref{main}.
\end{proof}

Note that a the sequence $(V_n,E_n)$ is convergent, in the sense of Elek and Szegedy, just if $t(K,H_n)$ converges for each $K$, in which case the $\liminf$ and $\limsup$ of the averages will converge to the same value.

\begin{corollary}
  Let $(\mathbb{N},E)$ be a countable graph.  Then there is a measure space $(Y,\mathcal{B},\mu)$ such that for any finite graph $V$, there is a measurable function $\tilde V$ such that
\[\liminf_{N\rightarrow\infty}\frac{1}{N}\sum_{n=1}^N t(V,([0,n],E\upharpoonright[0,n]))\leq\int\tilde Vd\mu\leq\limsup_{N\rightarrow\infty}\frac{1}{N}\sum_{n=1}^N t(V,([1,n],E\upharpoonright[1,n]))\]
\end{corollary}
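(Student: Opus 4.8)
The plan is to reduce the statement to the preceding corollary on sequences of finite graphs, which itself is an instance of Theorem \ref{main}. Given the countable graph $(\mathbb{N},E)$, I would set $H_n:=([0,n],E\upharpoonright[0,n])$ and observe that $\{H_n\}$ is a genuine sequence of finite graphs. Applying the preceding corollary to this sequence (equivalently, applying Theorem \ref{main} directly with $S=G=\mathbb{N}$ acting by translation, $X$ the compact space of finite graphs, $E(m):=H_m$, and $u_V(H):=t(V,H)$ the continuous functions already exhibited there) produces a measure space $(Y,\mathcal{B},\mu)$ and, for each finite graph $V$, a measurable function $\tilde V$ satisfying
\[\liminf_{N\rightarrow\infty}\frac{1}{N}\sum_{n=1}^N t(V,([0,n],E\upharpoonright[0,n]))\leq\int\tilde Vd\mu\leq\limsup_{N\rightarrow\infty}\frac{1}{N}\sum_{n=1}^N t(V,([0,n],E\upharpoonright[0,n])).\]
This is already the desired conclusion except that the right-hand average uses the vertex set $[0,n]$ rather than $[1,n]$.

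To close that cosmetic gap I would record a routine estimate. Writing $k:=|V|$, both quantities $t(V,([0,n],E\upharpoonright[0,n]))$ and $t(V,([1,n],E\upharpoonright[1,n]))$ are averages of the same embedding indicator over $k$-tuples of vertices, taken over $[0,n]^k$ and $[1,n]^k$ respectively. The two vertex sets differ only by the single vertex $0$, and the fraction of tuples in $[0,n]^k$ that meet $0$ is $1-(n/(n+1))^k\leq k/(n+1)$; combining this with the change in the normalizing factor gives
\[\bigl|t(V,([0,n],E\upharpoonright[0,n]))-t(V,([1,n],E\upharpoonright[1,n]))\bigr|=O(k/n).\]
Consequently the two Cesàro averages differ by $O((\log N)/N)\to 0$, so they share the same $\liminf$ and the same $\limsup$. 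Substituting these equalities into the displayed sandwich yields precisely the asserted inequality, with $[0,n]$ on the left and $[1,n]$ on the right.

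Because the construction is simply inherited from the preceding corollary, there is no real obstacle here: the only point that required genuine work, namely the continuity of $u_V(H)=t(V,H)$ on the compact space $X$ of finite graphs, was already settled there and may be reused verbatim. The one thing demanding even minor attention is the harmless $[0,n]$-versus-$[1,n]$ bookkeeping, which the $O(k/n)$ estimate above dispatches; I expect that to be the entirety of the additional content beyond citing the earlier result.
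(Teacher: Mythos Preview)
Your argument is correct, but it takes a genuinely different route from the paper's own proof. You reduce to the preceding corollary by feeding in the particular sequence $H_n=([0,n],E\upharpoonright[0,n])$, so the underlying semigroup is $G=\mathbb{N}$ acting by translation and $X$ is the compact space of finite graphs. The paper instead applies Theorem~\ref{main} directly with a completely different set of choices: $S=\mathbb{N}^{[2]}$, $G$ the group of finitely supported permutations of $\mathbb{N}$ acting on pairs by $\sigma\cdot\{a,b\}=\{\sigma(a),\sigma(b)\}$, the F{\o}lner sequence $I_n=\mathrm{Sym}([1,n])$, and $X=\{0,1\}$ with $E$ the characteristic function of the edge relation. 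Your approach is shorter and reuses the earlier work verbatim, but the resulting dynamical system only carries a $\mathbb{Z}$-action recording the shift in $n$. The paper's approach yields a system on which the full permutation group acts measure-preservingly, so the functions $\tilde E_{\{i,j\}}$ form an exchangeable array in the sense of Aldous--Hoover; this is the structure one actually wants for graph-limit applications, and it also lets one take $X=\{0,1\}$ rather than the more delicate space of finite graphs. Your $O(k/n)$ estimate handling the $[0,n]$/$[1,n]$ discrepancy is fine; the paper simply works with $[1,n]$ throughout and does not comment on the asymmetry in the displayed statement.
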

\begin{proof}
  Let $S$ be the space $\mathbb{N}^{[2]}$ (that is, the set of pairs of distinct elements $n,m$), and $G$ the permutations on finite subsets of $\mathbb{N}$, acting on $S$ by $s\cdot \{a,b\}=\{s(a),s(b)\}$.  Take $I_n$ to be the set of permutations on $[1,n]$, and note that this is a F{\o}lner sequence.  Let $X=\{0,1\}$; then the characteristic function of $E$ is a function from $S$ to $X$.  The measure space $(Y,\mathcal{B},\mu)$ and functions $\tilde V$ exist by the main theorem.
\end{proof}

\section{Furstenberg-Style Proof}
Let $S,G,X, E:S\rightarrow X$, and $\{I_n\}$ be given as in the statement of Theorem \ref{main}.   Let $Y$ be the space of functions from $S$ to $X$.  Let $\mathcal{O}$ be a countable subbasis for $X$.  The product topology is on $Y$ is compact by the Tychonoff theorem, and is generated by sets of the form $\{y\mid y(s)\in U\}$ for elements $U\in\mathcal{O}$.  Call sets of this form and complements of such sets \emph{simple}.

Let $\mathcal{C}$ be the algebra generated from the simple sets by finite unions and intersections (the simple sets are already closed under complements).  This algebra is countable, so by diagonalizing, choose a subsequence $n_t\rightarrow\infty$ such that for every $C\in\mathcal{C}$, the limit
\[\rho(C):=\lim_{t\rightarrow\infty}\frac{1}{|I_{n_t}|}\sum_{g\in I_{n_t}}\chi_C(E\circ g)\]
is defined.

\begin{lemma}
  $\rho$ is finitely additive.
\end{lemma}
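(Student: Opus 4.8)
The plan is to reduce everything to the linearity of the averaging operator together with the fact that characteristic functions are additive over disjoint unions. For each fixed $t$, write $A_t(f):=\frac{1}{|I_{n_t}|}\sum_{g\in I_{n_t}}f(E\circ g)$ for the normalized average applied to a function $f$ on $Y$. Each $A_t$ is a linear functional on functions on $Y$, and by the very choice of the subsequence $n_t$ we have $\rho(C)=\lim_{t\to\infty}A_t(\chi_C)$ for every $C\in\mathcal{C}$.

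Now let $C_1,C_2\in\mathcal{C}$ be disjoint. Since $\mathcal{C}$ is an algebra closed under finite unions, $C_1\cup C_2\in\mathcal{C}$, so $\rho(C_1\cup C_2)$ is defined. Disjointness gives the pointwise identity $\chi_{C_1\cup C_2}=\chi_{C_1}+\chi_{C_2}$, and applying the linear functional $A_t$ yields $A_t(\chi_{C_1\cup C_2})=A_t(\chi_{C_1})+A_t(\chi_{C_2})$ for every $t$.

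Finally I would take $t\to\infty$. Because $C_1$, $C_2$, and $C_1\cup C_2$ all lie in $\mathcal{C}$, each of the three sequences $A_t(\chi_{C_1})$, $A_t(\chi_{C_2})$, and $A_t(\chi_{C_1\cup C_2})$ converges by construction; the limit of a sum of convergent sequences is the sum of the limits, so $\rho(C_1\cup C_2)=\rho(C_1)+\rho(C_2)$, which is exactly finite additivity.

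There is no real obstacle here: the content of the lemma is simply that passing to a subsequence preserves the additivity already enjoyed by every individual finite average $A_t$. The one point deserving a moment's attention is bookkeeping rather than substance, namely confirming that $C_1\cup C_2$ lies in the domain $\mathcal{C}$ so that $\rho(C_1\cup C_2)$ is well defined — which is precisely why $\mathcal{C}$ was arranged to be an algebra closed under finite unions before the diagonalization was carried out.
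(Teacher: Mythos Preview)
Your argument is correct and is precisely the paper's approach, only written out in more detail: the paper's proof is the one-line remark that finite additivity is ``immediate by expanding the definition, since finite sums distribute over limits and multiplication by constants,'' which is exactly the linearity-of-$A_t$-plus-limit-of-sums reasoning you give.
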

\begin{proof}
  Immediate by expanding the definition, since finite sums distribute over limits and multiplication by constants.
\end{proof}

Since $\rho$ is non-negative, it is also monotonic.  Define a $G$ action on $Y$ by $T_g(y)(s):=y(gs)$.  For any $g$ and large enough $n_t$, $\frac{|I_{n_t}\bigtriangleup g\cdot I_{n_t}|}{|I_{n_t}|}\rightarrow 0$, so
\[|\rho(C)-\rho(T_g C)|=\lim_{t\rightarrow\infty}\frac{1}{|I_{n_t}|}\sum_{g\in I_{n_t}\bigtriangleup g\cdot I_{n_t}}\chi_C(E\circ g)=0\]
For an open set $C\in\mathcal{C}$, define $\nu(C)$ to be the supremum of $\rho(C')$ where $C'$ ranges over closed elements of $\mathcal{C}$ contained in $C$:
\[\nu(C)=\sup_{D\subseteq C,D\text{ closed}}\rho(D)\]
$\nu$ is finitely additive on open sets, since if $A$ and $B$ are disjoint, $\nu(A\cup B)=\nu(A)+\nu(B)$ by the definition.  Therefore there is a unique finitely additive extension of $\nu$ to all of $\mathcal{C}$.

\begin{lemma}
  If $C$ is open then $\nu(C)\leq\rho(C)$.
\end{lemma}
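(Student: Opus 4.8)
The plan is to derive the inequality directly from the monotonicity of $\rho$, which has already been recorded (it follows from finite additivity together with non-negativity of $\rho$). The key observation is that $\nu(C)$ is, by construction, a supremum of values $\rho(D)$, each of which I will show is bounded above by $\rho(C)$; the inequality $\nu(C)\leq\rho(C)$ then falls out immediately.

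First I would fix an open $C\in\mathcal{C}$ and recall that, by definition, $\nu(C)=\sup_{D\subseteq C,\,D\text{ closed}}\rho(D)$, where $D$ ranges over the closed members of $\mathcal{C}$ contained in $C$. Since the supremum of a family of real numbers each bounded by $\rho(C)$ is itself bounded by $\rho(C)$, it suffices to verify that every such $D$ satisfies $\rho(D)\leq\rho(C)$.

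To see this, note that $D\subseteq C$ with both $D,C\in\mathcal{C}$. Because $\mathcal{C}$ is an algebra, the difference $C\setminus D$ also lies in $\mathcal{C}$, and $C=D\sqcup(C\setminus D)$ is a disjoint union of elements of $\mathcal{C}$. Finite additivity of $\rho$ then gives $\rho(C)=\rho(D)+\rho(C\setminus D)$, and since $\rho$ takes only non-negative values we conclude $\rho(D)\leq\rho(C)$. (Equivalently, $\chi_D\leq\chi_C$ pointwise, so each term of the averages defining $\rho(D)$ is dominated by the corresponding term for $\rho(C)$, and the inequality survives passage to the limit along $\{n_t\}$.) Taking the supremum over all admissible closed $D\subseteq C$ yields $\nu(C)\leq\rho(C)$.

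I do not expect a genuine obstacle here: the content of the lemma is essentially bookkeeping layered on top of the monotonicity of $\rho$ established just above. The one point worth checking explicitly is that $C\setminus D$ remains in $\mathcal{C}$ so that finite additivity applies, and this is immediate from $\mathcal{C}$ being closed under complements and finite intersections. The substantive work — comparing $\nu$ with $\rho$ in the reverse direction and ultimately relating $\nu$ to the $\liminf$ and $\limsup$ of the averages in Theorem~\ref{main} — is presumably deferred to the subsequent lemmas.
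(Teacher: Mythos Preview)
Your argument is correct and matches the paper's proof essentially line for line: monotonicity of $\rho$ (which the paper has already recorded as a consequence of finite additivity and non-negativity) gives $\rho(D)\leq\rho(C)$ for every closed $D\subseteq C$, and taking the supremum over such $D$ yields $\nu(C)\leq\rho(C)$. Your only addition is spelling out the monotonicity step via the disjoint decomposition $C=D\sqcup(C\setminus D)$, which the paper had already dispatched in the sentence preceding the definition of $\nu$.
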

\begin{proof}
  If $D\subseteq C$ then $\rho(D)\leq\rho(C)$.  Since
\[\nu(C)=\sup_{D\subseteq C,D\text{ closed}}\rho(D)\]
also $\nu(C)\leq\rho(C)$.
\end{proof}

\begin{lemma}
  For any $C\in\mathcal{C}$ and any $\epsilon>0$, there is a $D\subseteq C$ such that $D\in\mathcal{C}$, $D$ is closed, and $\nu(C)-\nu(D)<\epsilon$.
\end{lemma}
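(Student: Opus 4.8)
The plan is to show that the collection $\mathcal{R}$ of sets $C\in\mathcal{C}$ satisfying the conclusion---namely, that for every $\epsilon>0$ there is a closed $D\in\mathcal{C}$ with $D\subseteq C$ and $\nu(C)-\nu(D)<\epsilon$---contains every simple set and is closed under finite unions and intersections. Since $\mathcal{C}$ is by definition generated from the simple sets by these two operations, this forces $\mathcal{R}=\mathcal{C}$. The whole argument is a routine induction on the generation of $\mathcal{C}$ once two preliminary facts about $\nu$ are in place.

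The step I expect to be the main obstacle is the \emph{dual} of the preceding lemma: for every \emph{closed} $D\in\mathcal{C}$ one has $\nu(D)\geq\rho(D)$. I would prove this by passing to the open complement $D^c$, applying the preceding lemma in the form $\nu(D^c)\leq\rho(D^c)$, and combining finite additivity of $\nu$ and of $\rho$ with the equality $\nu(Y)=\rho(Y)$ (valid since $Y$ is closed, so $\nu(Y)=\rho(Y)$ directly from the supremum definition): then $\nu(D)=\nu(Y)-\nu(D^c)\geq\rho(Y)-\rho(D^c)=\rho(D)$. The second fact is that $\nu$ is monotone, and hence subadditive, on all of $\mathcal{C}$. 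This reduces to non-negativity, which I would obtain by noting that $\nu$ is monotone on open sets straight from its definition as a supremum over closed subsets, and then decomposing an arbitrary $C$ into the atoms of the finite subalgebra generated by the finitely many simple sets occurring in it; each atom is an intersection $O\cap F$ of an open set $O$ with a closed set $F$, and $\nu(O\cap F)=\nu(O)-\nu(O\cap F^c)\geq 0$ because $O\cap F^c\subseteq O$ are both open and $\nu$ is monotone on open sets.

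With these in hand the simple sets are immediate: if $C$ is a closed simple set take $D=C$; if $C$ is an open simple set, then by definition $\nu(C)=\sup\{\rho(D):D\subseteq C,\ D\in\mathcal{C}\text{ closed}\}$, so I may pick a closed $D\in\mathcal{C}$ with $D\subseteq C$ and $\rho(D)>\nu(C)-\epsilon$, whereupon the dual inequality $\nu(D)\geq\rho(D)$ yields $\nu(C)-\nu(D)<\epsilon$. For closure, given $C_1,C_2\in\mathcal{R}$ and $\epsilon>0$, choose closed $D_i\subseteq C_i$ in $\mathcal{C}$ with $\nu(C_i\setminus D_i)=\nu(C_i)-\nu(D_i)<\epsilon/2$. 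Then $D_1\cap D_2$ and $D_1\cup D_2$ are closed members of $\mathcal{C}$ lying inside $C_1\cap C_2$ and $C_1\cup C_2$ respectively, and the inclusions $(C_1\cap C_2)\setminus(D_1\cap D_2)\subseteq(C_1\setminus D_1)\cup(C_2\setminus D_2)$ and $(C_1\cup C_2)\setminus(D_1\cup D_2)\subseteq(C_1\setminus D_1)\cup(C_2\setminus D_2)$, together with subadditivity of $\nu$, bound each error by $\nu(C_1\setminus D_1)+\nu(C_2\setminus D_2)<\epsilon$. Thus $\mathcal{R}$ is closed under both operations, completing the induction and hence the proof.
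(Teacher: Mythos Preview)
Your proof is correct and is organized somewhat differently from the paper's. Both arguments treat simple sets identically---closed ones trivially, open ones via the defining supremum together with the dual inequality $\rho(D)\leq\nu(D)$ for closed $D$---but diverge in the inductive step. The paper puts an arbitrary $C\in\mathcal{C}$ into disjunctive normal form $\bigcup_i\bigcap_j C_{i,j}$ with pairwise disjoint conjuncts, then within each conjunct replaces the open simple factors one at a time by closed subsets, controlling the loss via an auxiliary claim that if one factor $D$ is shrunk to a closed $D'$ with $\nu(D)-\nu(D')<\epsilon$ then $\nu(\bigcap C_j\cap D)-\nu(\bigcap C_j\cap D')<\epsilon$. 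You instead run a clean structural induction: the class $\mathcal{R}$ of inner-regular sets contains the simple sets and is closed under $\cup$ and $\cap$ by the standard inclusions $(C_1\ast C_2)\setminus(D_1\ast D_2)\subseteq(C_1\setminus D_1)\cup(C_2\setminus D_2)$ together with subadditivity of $\nu$. Your route is more transparent and, notably, you are more careful than the paper about the two auxiliary facts both proofs need: you explicitly derive the dual inequality $\nu(D)\geq\rho(D)$ for closed $D$ by complementation, and you explicitly establish non-negativity (hence monotonicity and subadditivity) of $\nu$ on all of $\mathcal{C}$ via the atom decomposition, whereas the paper invokes both (``Since $\rho(D)\leq\nu(D)$'' and ``Since $\bigcap C_i\cap(D\setminus D')\subseteq D\setminus D'$, it follows that $\nu(\bigcap C_i\cap(D\setminus D'))<\epsilon$'') without justification.
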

\begin{proof}
  First, suppose $C$ is simple.  If $C$ is closed, $C$ itself suffices, so suppose $C$ is open.  Then there is a closed $D\subseteq C$ such that $\rho(D)\geq\nu(C)-\epsilon$.  Since $\rho(D)\leq\nu(D)$, also $\nu(D)\geq\nu(C)-\epsilon$.

Any $C$ may be written as a finite union of finite intersections $C=\bigcup_{i\leq k}\bigcap_{j\leq m_i}C_{i,j}$ where each $C_{i,j}$ is simple; by adding additional terms, $\bigcap_j C_{i,j}$ may be assumed to be disjoint from $\bigcap_j C_{i',j}$ for $i'\neq i$.  Then it suffices to show the lemma for each $i\leq k$ separately.  Suppose that whenever $C=\bigcap_{j\leq m}C_j\cap D$ and $D$ is open then we may find a closed $D'\subseteq D$ such that $\nu(\bigcap C_j\cap D)-\nu(\bigcap C_j\cap D')<\epsilon$.  Then we may define $C'_j$ inductively so that if $C_j$ is closed, $C'_j:=C_j$, and if $C_j$ is open, $C'_j\subseteq C_j$, $C'_j$ is closed, and
\[\nu(\bigcap_{j'<j} C'_{j'}\cap\bigcap_{j'>j}C_{j'}\cap C_j)-\nu(\bigcap_{j'<j} C'_{j'}\cap\bigcap_{j'>j}C_{j'}\cap C'_j)<\epsilon/j\]
Then $\bigcap C'_j$ is closed and $\nu(\bigcap C_j)-\nu(\bigcap C'_j)<\epsilon$.

To show the assumption, choose $D'\subseteq D$ so that $\nu(D)-\nu(D')<\epsilon$ and write
\[C=\left(\bigcap C_i\cap D'\right)\cup\left(\bigcap C_i\cap (D\setminus D')\right)\]
Since $\bigcap C_i\cap (D\setminus D')\subseteq D\setminus D'$, it follows that $\nu(\bigcap C_i\cap (D\setminus D'))<\epsilon$, and therefore $\nu(C)-\nu(\bigcap C_i\cap D')<\epsilon$.
\end{proof}

By taking complements, for any $C\in\mathcal{C}$ and any $\epsilon>0$, there is a $D\supseteq C$ such that $D\in\mathcal{C}$, $D$ is open, and $\nu(D)-\nu(C)<\epsilon$.

\begin{lemma}
  $\nu$ is $\sigma$-additive.
\end{lemma}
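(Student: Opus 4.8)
The plan is to exploit the compactness of $Y$ together with the inner and outer approximation results just established. Here $\sigma$-additivity for the finitely additive $\nu$ on the algebra $\mathcal{C}$ means that whenever $\{C_i\}_{i\in\mathbb{N}}$ are pairwise disjoint elements of $\mathcal{C}$ whose union $C=\bigcup_i C_i$ also lies in $\mathcal{C}$, we have $\nu(C)=\sum_i\nu(C_i)$. One inequality is free: since the $C_i$ are disjoint, finite additivity gives $\sum_{i\le N}\nu(C_i)=\nu(\bigcup_{i\le N}C_i)$, and monotonicity gives $\nu(\bigcup_{i\le N}C_i)\le\nu(C)$; letting $N\to\infty$ yields $\sum_i\nu(C_i)\le\nu(C)$. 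So the real content is the reverse inequality $\nu(C)\le\sum_i\nu(C_i)$.

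For that inequality I would fix $\epsilon>0$ and approximate from two sides. By the preceding lemma, choose a closed $D\in\mathcal{C}$ with $D\subseteq C$ and $\nu(C)-\nu(D)<\epsilon$; since $Y$ is compact, $D$ is compact. By the remark obtained from it by taking complements, for each $i$ choose an open $U_i\in\mathcal{C}$ with $C_i\subseteq U_i$ and $\nu(U_i)-\nu(C_i)<\epsilon 2^{-i}$. Because $D\subseteq C=\bigcup_i C_i\subseteq\bigcup_i U_i$, the family $\{U_i\}$ is an open cover of the compact set $D$, so it admits a finite subcover $D\subseteq\bigcup_{i\le N}U_i$.

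The two estimates then combine through finite subadditivity (itself a consequence of finite additivity and monotonicity), using that $\bigcup_{i\le N}U_i\in\mathcal{C}$:
\[\nu(C)-\epsilon<\nu(D)\le\nu\Big(\bigcup_{i\le N}U_i\Big)\le\sum_{i\le N}\nu(U_i)<\sum_{i\le N}\big(\nu(C_i)+\epsilon 2^{-i}\big)\le\sum_i\nu(C_i)+\epsilon.\]
Thus $\nu(C)<\sum_i\nu(C_i)+2\epsilon$ for every $\epsilon>0$, which gives $\nu(C)\le\sum_i\nu(C_i)$ and hence equality.

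The one genuinely load-bearing step is the passage from the countable cover $\{U_i\}$ to a finite subcover, which is exactly where compactness of $Y$ (via Tychonoff) is indispensable; without it the outer approximations would only control $\sum_i\nu(U_i)$, and one would have no way to bound $\nu(D)$, let alone $\nu(C)$, by a finite portion of that sum. Everything else is bookkeeping: the inner approximation converts $C$ into a compact target, the geometrically weighted outer approximations keep the accumulated error below $\epsilon$, and finite subadditivity closes the loop.
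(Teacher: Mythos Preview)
Your proof is correct and follows essentially the same route as the paper: inner-approximate $C$ by a closed set, outer-approximate each $C_i$ by an open set with controlled total error, extract a finite subcover by compactness of $Y$, and finish with finite (sub)additivity. The only cosmetic difference is that you make the geometric weights $\epsilon 2^{-i}$ explicit where the paper simply asserts the sum of errors can be made less than $\epsilon/2$.
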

\begin{proof}
  Suppose $\bigcup_{i\in\mathbb{N}}C_i=C$.  For any $\epsilon>0$, we may choose $C'\subseteq C$ such that $\nu(C)-\nu(C')<\epsilon/2$ and $C'$ is closed, and sets $C'_i\supseteq C_i$ such that $C'_i$ is open and $\sum_i (\nu(C'_i)-\nu(C_i))<\epsilon/2$.  $C'\subseteq \bigcup_i C'_i$, so by the compactness of $Y$, there is a finite subcover of $C'$, $C'\subseteq\bigcup_{j\leq k}C'_{i_j}$, so by finite additivity $\nu(C')\leq\sum_j \nu(C'_{i_j})$.  But then $\nu(C)\leq\sum_j\nu(C_{i_j})+\epsilon$.  Since we may choose $\epsilon$ arbitrarily small, it follows that $\nu(C)\leq\sum_i\nu(C_i)$.

  Conversely, since for each $k$, $\bigcup_{i\leq k}C_i\subseteq C$, finite additivity gives $\sum_{i\leq k}\nu(C_i)\leq\nu(C)$, and therefore $\sum_i\nu(C_i)\leq\nu(C)$.
\end{proof}

For each $s\in S$, define $\tilde E_s$ to be the function $y\mapsto y(s)$.  These functions are measurable since for any Borel set $B$ on $X$, $\{y\mid y(s)\in B\}$ belongs to $\mathcal{C}$; indeed, replacing $B$ by an arbitrary open set $U$ and applying the same argument shows that these functions are continuous.  Then by definition, $\tilde E_{gs}=\tilde E_s\circ T_g$.

Let some $u, s_1,\ldots,s_k$ be given.  Since $X^k$ is compact and $u$ is continuous, and a continuous function with compact support is integrable, it follows that the function $\tilde u$ given by $\tilde u(y):= u(\tilde E_{s_1}(x),\ldots,\tilde E_{s_k}(x))$ is integrable.  In particular, $|\tilde u(y)|$ has a compact range, and is therefore bounded by some $B$.

Therefore there is a sequence of functions $u_n$ of the form
\[u_n=\sum_{i=0}^{N_n}v_{n,i}\chi_{S_{n,i}}\]
for elements $v_{n,i}\in V$ and measurable sets $S_{n,i}$ such that
\[\lim_{N\rightarrow\infty}|u_n(y)-\tilde u(y)|=0\]
almost everywhere.  Since $|v_{n,i}|$ is bounded by $B$, for any $\epsilon>0$ we may choose $n$ so that
\[u_n=\sum_{i=0}^{N}v_i\chi_{S_i}\]
and there is a set $C$ so that $C\cup\bigcup_i S_i=Y$, $C\cap \bigcup_i S_i=\emptyset$, the $S_i$ are pairwise disjoint, $\nu(C)<\epsilon/2B$, and $|u_n(y)-\tilde u(y)|<\epsilon/2B$ whenever $y\not\in C$.

For each $i$, we may choose an open set $S'_i$ containing $S_i$ such that $\nu(S'_i)-\nu(S_i)<\epsilon/2|v_i|$; since the set of $v'\in V$ within $\epsilon/|v_i|$ of $v_i$ is open and $\tilde u$ is continuous, we may further require that for every $y\in S'_i$, $|\tilde u(y)-v_i|<\epsilon/2|v_i|$.

We may then choose a closed $S''_i\subseteq S'_i$ such that $\nu(S'_i)-\rho(S''_i)<\epsilon/2N|v_i|$.  Then
\[|\sum_i v_i\nu(S_i)-\sum_i v_i\rho(S''_i)|<\epsilon/2\]
But also
\[\sum_i v_i\rho(S''_i)=\lim_{t\rightarrow\infty}\sum_i v_{i}\frac{|\{g\in I_{n_t}\mid E\circ g\in S''_{i}\}|}{|I_{n_t}|}=\lim_{t\rightarrow\infty}\frac{1}{|I_{n_t}|}\sum_i\sum_{g\in I_{n_t}} v_{i}\cdot \chi_{S''_{i}}(E\circ g)\]
Whenever $\chi_{S''_i}(E\circ g)=1$, it follows that
\[|u(E(gs_1),\ldots,E(gs_k))-v_{i}|<\epsilon/2|v_{i}|\]
Therefore this limit is within $\epsilon/2$ of
\[\lim_{t\rightarrow\infty}\frac{1}{|I_{n_t}|}\sum_i\sum_{g\in I_{n_t}} u(E(gs_1),\ldots,E(gs_k))\cdot \chi_{S''_{i}}(E\circ g)\]
But since $\lim_{t\rightarrow\infty}\frac{1}{|I_{n_t}|}\sum_i\sum_{g\in I_{n_t}}\chi_{S''_i}(E\circ g)\geq 1-\epsilon/2B$, it follows that the limit is within $\epsilon$ of
\[\lim_{t\rightarrow\infty}\frac{1}{|I_{n_t}|}\sum_{g\in I_{n_t}} u(E(gs_1),\ldots,E(gs_k))\]
Putting this together,
\[|\int \tilde ud\nu-\lim_{t\rightarrow\infty}\frac{1}{|I_{n_t}|}\sum_{g\in I_{n_t}} u(E(gs_1),\ldots,E(gs_k))|<2\epsilon\]
for all $\epsilon$.  Since $\{I_{n_t}\}$ is a subsequence of $\{I_t\}$, the result follows.

%\section{A Strong Theorem}
%The main theorem can be strengthened somewhat if we wish to allow $S$ to be a second countable topological space.
%
%\begin{theorem}
%  Let $S,X$ be second countable spaces with $X$ compact, and let $E:S\rightarrow X$ be continuous.  Let $G$ be an amenable group acting on $S$.  Let $m$ be a left-invariant finitely additive probablity measure on $G$.
%
%  Then there is a dynamical system $(Y,\mathcal{B},\mu,G)$ and a continuous function $\tilde E:Y\times S\rightarrow X$ such that $\tilde E$ is invariant under the action $(y,s)\mapsto (g^{-1}y,gs)$ and for any vector space $V$ and any continuous function $u:X^k\rightarrow V$ for some finite $k$, and any finite sequence $s_1,\ldots,s_k$,
%\[m(g\mapsto u(E(gs_1),\ldots,E(gs_k)))=\int u(\tilde E(y,s_1),\ldots,\tilde E(y,s_k)) d\mu(y)\]
%\end{theorem}

\section{Minimality}
We may further prove that the construction given in the previous section is the smallest such system up to the choices made in the construction.

\begin{theorem}
   Let $(Y,\mathcal{C},\nu)$ and $\{\tilde E_s\}$ satisfy the conclusion of the main theorem.  Let $(X,\mathcal{B},\mu)$ and $\{\tilde D_s\}$ be given by the Furstenberg-style proof in the previous section so that for every $u, s_1,\ldots,s_k$, and $\alpha$,
 \[\nu (\{y\mid u(\tilde E_{s_1}(y),\ldots,\tilde E_{s_k}(y))>\alpha\})=\mu(\{x\mid u(\tilde D_{s_1}(x),\ldots,\tilde D_{s_k}(x))>\alpha\})\]
 Then there is a measurable measure-preserving function $\pi:Y\rightarrow X$ such that $\tilde E_s=\tilde D_s\circ\pi$.
 \label{minimality}
 \end{theorem}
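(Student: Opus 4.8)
The plan is to exploit the fact that, in the construction of the previous section, the points of $X$ are \emph{themselves} functions on $S$ (valued in the given compact space), with $\tilde D_s$ being the evaluation map $\tilde D_s(x)=x(s)$. This makes the candidate for $\pi$ essentially forced: I would define $\pi:Y\rightarrow X$ by letting $\pi(y)$ be the function $s\mapsto\tilde E_s(y)$, which is a legitimate element of $X$ since $X$ consists of \emph{all} functions from $S$ to the compact space. With this definition the identity $\tilde E_s=\tilde D_s\circ\pi$ is immediate, as $\tilde D_s(\pi(y))=\pi(y)(s)=\tilde E_s(y)$. Everything then reduces to checking that $\pi$ is measurable and measure-preserving.

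Measurability is routine. The $\sigma$-algebra $\mathcal{B}$ on $X$ is generated by the simple sets $\{x\mid x(s)\in U\}$ with $U$ ranging over the countable subbasis $\mathcal{O}$, and their preimages are $\pi^{-1}(\{x\mid x(s)\in U\})=\{y\mid\tilde E_s(y)\in U\}$, which lie in $\mathcal{C}$ because each $\tilde E_s$ is measurable by hypothesis. Since these sets generate $\mathcal{B}$, $\pi$ is measurable.

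The substance is showing that $\pi$ is measure-preserving, i.e.\ that the pushforward $\pi_*\nu$ agrees with $\mu$ on all of $\mathcal{B}$. I would establish this by testing against continuous functions: since $X$ is a countable product of copies of a second countable compact space it is compact and metrizable, and two finite Borel measures on such a space coincide as soon as they assign equal integrals to every continuous $f:X\rightarrow\mathbb{R}$. So it suffices to prove $\int f\circ\pi\,d\nu=\int f\,d\mu$ for continuous $f$. By Stone--Weierstrass every continuous $f$ is a uniform limit of functions of the form $x\mapsto u(x(s_1),\ldots,x(s_k))=u(\tilde D_{s_1}(x),\ldots,\tilde D_{s_k}(x))$ with $u$ continuous on a finite power of the compact space, so by uniform convergence and the finiteness of both measures it is enough to treat these. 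For such an $f$ one has $f\circ\pi=u(\tilde E_{s_1},\ldots,\tilde E_{s_k})$, and the hypothesis asserts precisely that the observables $u(\tilde E_{s_1},\ldots,\tilde E_{s_k})$ on $(Y,\nu)$ and $u(\tilde D_{s_1},\ldots,\tilde D_{s_k})$ on $(X,\mu)$ have equal super-level-set measures for every $\alpha$; hence they induce the same law on $\mathbb{R}$, and since both are bounded (continuous images of a compact set) a layer-cake computation yields equality of their integrals, which is exactly $\int f\circ\pi\,d\nu=\int f\,d\mu$.

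The main obstacle is this last, measure-preserving step: the hypothesis only controls super-level sets of \emph{finite-coordinate} continuous observables, while measure-preservation concerns arbitrary sets in $\mathcal{B}$. Bridging the gap requires two standard-but-essential moves — the Stone--Weierstrass reduction of a general continuous function on the product space to one depending on finitely many coordinates, and the weak-$*$ uniqueness principle that a finite Borel measure on a compact metric space is determined by its integrals against continuous functions. Converting ``equal survival functions'' into ``equal integrals'' is the one place a genuine (if elementary) computation is needed, and here it is important to observe that boundedness of the observables, guaranteed by continuity on the compact target, is what makes that conversion valid.
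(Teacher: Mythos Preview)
Your proposal is correct: the definition of $\pi$, the identity $\tilde E_s=\tilde D_s\circ\pi$, and the measurability argument all match the paper's proof exactly. The divergence is only in how you establish that $\pi$ is measure-preserving.

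The paper takes a shorter and more set-theoretic route. It observes that the Borel $\sigma$-algebra $\mathcal{B}$ is generated by the sets $B_{u,s_1,\ldots,s_k,\alpha}=\{x\mid u(\tilde D_{s_1}(x),\ldots,\tilde D_{s_k}(x))>\alpha\}$, and since $\pi^{-1}(B_{u,s_1,\ldots,s_k,\alpha})=\{y\mid u(\tilde E_{s_1}(y),\ldots,\tilde E_{s_k}(y))>\alpha\}$, the hypothesis gives $\nu(\pi^{-1}B)=\mu(B)$ for every such $B$ directly. This family is closed under finite intersection (replace $u,v$ by $\min(u-\alpha,v-\beta)$ with threshold $0$), so a $\pi$-system argument finishes. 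No passage to integrals, no Stone--Weierstrass, no layer-cake.

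Your route---testing the pushforward $\pi_*\nu$ against continuous functions, reducing via Stone--Weierstrass to cylinder observables, and recovering equality of integrals from equality of survival functions---is perfectly valid and is the natural functional-analytic phrasing. What it buys is a self-contained argument that makes explicit why compactness and second countability matter (they give metrizability of the product, hence the Riesz uniqueness you invoke). What the paper's approach buys is economy: since the hypothesis is already stated in terms of measures of sets, there is no need to translate into integrals and back; one simply reads off the equality on a generating $\pi$-system.
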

 \begin{proof}
   The second condition uniquely defines $\pi(y)\in S\rightarrow X$ by $\pi(y)(s):=\tilde E_s(y)$.  Observe that the inverse image of each simple set on $X$ is measurable since each $\tilde E_s$ is measurable.  %Let $\nu'$ be the measure on $X$ given by $\nu'(B)=\nu(\pi^{-1}(B))$.  Recall the finitely additive measure $\rho$ on $X$ defined above.

The measure space $(X,\mathcal{B},\mu)$ is generated by sets of the form
\[B_{u,s_1,\ldots,s_k,\alpha}:=\{x\mid u(\tilde D_{s_1}(x),\ldots,\tilde D_{s_k}(x))>\alpha\}\]
But then
\[\nu(\{y\mid u(\tilde D_{s_1}(\pi(y)),\ldots,\tilde D_{s_k}(\pi(y)))>\alpha\}=\nu(\{y\mid u(\tilde E_{s_1}(y),\ldots,\tilde E_{s_k}(y))>\alpha\})\]
which is equal to $\mu(B_{u,s_1,\ldots,s_k,\alpha})$ by assumption.

 \end{proof}

\section{Nonstandard Proof}

For a general reference on nonstandard analysis, see, for example, \cite{Goldblatt}.

Fix an ultrapower extension of a universe containing all objects given in the premises and their powersets.  The sequence $\{I_n\}$ is a sequence of subsets of $G$, so let $Y:=I_m$ for some nonstandard $m$ be a subset of $G^*$.  For any internal $B\subseteq Y$, let $\mu(B)$ be the standard part of $\frac{|B|}{|Y|}$.  By the Loeb measure construction, this extends to a true measure on the $\sigma$-algebra extending the set of internal subsets of $Y$.

For each $s\in S$, define 
\[E_s(g):=E^*(gs^*)\]
for every $g\in G^*$.  These functions are internal, and therefore measurable.  Define an action of $G$ on $Y$ by $T_g g':=g'g^*$.  Then $g E_s(g')=E_s(g'g^*)=E^*(g'g^*s^*)=E_{gs}(g')$, as required.  Then $E_s$ is a function from $Y$ to $X^*$.

For every $\epsilon$ and large enough $t$, $\frac{|I_t\bigtriangleup g\cdot I_t|}{|I_t|}<\epsilon$, so in particular, $\frac{|Y\bigtriangleup g\cdot Y|}{|Y|}$ is infinitesimal, and therefore $\mu(Y\setminus T_g Y)=0$.  Then, for any internal $B\subseteq Y$, $T_g B\subseteq T_g Y$, and so $\mu(T_g B)=\mu(B)$.

Letting $\mathcal{O}$ be the open sets of $X$.  For each $x\in X^*$, consider
\[\{U\in\mathcal{O}\mid x\not\in U^*\}\]
Then the starred versions of the complements of these $U$ have the finite intersection property, and therefore the complements of these $U$ have the finite intersection property (for any finite set of these $U$, since there exists an element in $X^*$ in all of them, by transfer, there also exists an element in $X$ in all of them).  By the compactness of $X$, it follows that there is an element $st(x)$ contained in all of these sets.

Now let $\tilde E_s:=st\circ E_s$.  If $u$ is a continuous function from $X^k\rightarrow \mathbb{R}$, $\{s_1,\ldots,s_k\}$ is a finite set then for every $\epsilon$ and all but finitely many $N$,
\[\frac{1}{|I_N|}\sum_{g\in I_N}u(E(g\cdot s_1),\ldots,E(g\cdot s_k))+\epsilon\geq\liminf_{N\rightarrow\infty}\frac{1}{|I_N|}\sum_{g\in I_N}u(E(g\cdot s_1),\ldots,E(g\cdot s_k))\]
and so, by transfer,
\[\frac{1}{|Y|}\sum_{g\in Y}u^*(E^*(g\cdot s_1),\ldots,E^*(g\cdot s_k))\geq\liminf_{N\rightarrow\infty}\frac{1}{|I_N|}\sum_{g\in I_N}u(E(g\cdot s_1),\ldots,E(g\cdot s_k))\]

Note that for any $x$ and any open set $U$ of $X$ containing $st(x)$, $U^*$ contains $x$, since otherwise the complement of $U^*$ would be a closed set containing $x$, and therefore also $st(x)$.  Since $X$ is compact and $u$ is continuous, for each $\epsilon>0$ and each $x_1,\ldots,x_k$, there is an open subset $U$ of $X^k$ containing $st(x_1),\ldots,st(x_k)$ so that $|u(x'_1,\ldots,x'_k)-u(st(x_1),\ldots,st(x_k))|<\alpha$ for each $x'_1,\ldots,x_k\in U$.  But then $U^*$ contains $x_1,\ldots,x_k$, which means that $|u^*(x_1,\ldots,x_k)-u(st(x_1),\ldots,st(x_k))|<\alpha$.  Since this holds for every $\alpha$, it follows that $st(u^*(x_1,\ldots,x_k))=u(st(x_1),\ldots,st(x_k))$.  Therefore
\[\frac{1}{|Y|}\sum_{g\in Y}u(\tilde E_{s_1}(g),\ldots,\tilde E_{s_k}(g))\geq\liminf_{N\rightarrow\infty}\frac{1}{|I_N|}\sum_{g\in I_N}u(E(g\cdot s_1),\ldots,E(g\cdot s_k))\]
and also
\[\int u(\tilde E_{s_1}(g),\ldots,\tilde E_{s_k}(g))d\mu=st(\frac{1}{|Y|}\sum_{g\in Y}u(\tilde E(g\cdot s_1),\ldots,\tilde E(g\cdot s_k)))\geq\liminf_{N\rightarrow\infty}\frac{1}{|I_N|}\sum_{g\in I_N}u(E(g\cdot s_1),\ldots,E(g\cdot s_k))\]
Since the dual claim holds for the $\limsup$, the result follows.

\section{Amenable Groups}
It is natural to wonder whether the requirement that $G$ be countable can be lifted.  Examination of the proofs above shows that the only place countability is used is in the diagonalization process in the Furstenberg-style proof.  Therefore we may immediately extend this to the following:
\begin{theorem}
  Let $S$ be set and let $G$ be an amenable semigroup acting on $S$ with left-invariant mean $m$.  Let $X$ be a compact space.  Let $E:S\rightarrow X$ be given.

  Then there are a dynamical system $(Y,\mathcal{B},\mu,(T_g)_{g\in G})$ and functions $\tilde E_{s}:Y\rightarrow X$ for each $s\in S$ such that the following hold:
  \begin{itemize}
  \item For any $g,s$, $\tilde E_{gs}=\tilde E_s\circ T_g$
  \item For any continuous function $u:X^k\rightarrow \mathbb{R}$ for some integer $k$, and any finite sequence $s_1,\ldots, s_k$
\begin{align*}
\liminf_{n\rightarrow\infty}\frac{1}{|I_n|}\sum_{g\in I_n}u(E(g s_1),\ldots,E(gs_k))&\leq\int  u(\tilde E_{s_1},\ldots,\tilde E_{s_k})d\mu\\
&\leq\limsup_{n\rightarrow\infty}\frac{1}{|I_n|}\sum_{g\in I_n}u(E( gs_1),\ldots,E( gs_k))
\end{align*}
\end{itemize}
\end{theorem}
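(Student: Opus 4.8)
The plan is to reprise the Furstenberg-style construction of Section 3 essentially verbatim, changing only the one step that used countability---the diagonalization producing the subsequential density $\rho$---and replacing it by an appeal to the invariant mean. Concretely, let $Y$ be the space of functions from $S$ to $X$, compact by Tychonoff (which needs only that $X$ is compact, not second countable); fix a subbasis $\mathcal{O}$ for $X$ and let $\mathcal{C}$ be the algebra generated by the simple sets as before, noting that $\mathcal{C}$ need no longer be countable. For $C\in\mathcal{C}$ I would set
\[\rho(C):=m\big(g\mapsto\chi_C(E\circ g)\big),\]
where $E\circ g$ denotes the point $s\mapsto E(gs)$ of $Y$ and $m$ is applied to the bounded function $g\mapsto\chi_C(E\circ g)$ on $G$. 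Finite additivity and monotonicity of $\rho$ are then immediate from the linearity and positivity of $m$, exactly as in Section 3.

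The one substantive change is the $G$-invariance of $\rho$ (hence of $\nu$ and of the eventual measure $\mu$). I would observe that $\chi_{T_gC}(E\circ h)=\chi_C\big(T_g(E\circ h)\big)$ equals $\chi_C(E\circ(hg))$, so that the function $h\mapsto\chi_{T_gC}(E\circ h)$ is a \emph{translate} of $h\mapsto\chi_C(E\circ h)$; invariance of $\rho$ under $T_g$ is therefore exactly the translation-invariance of $m$. This is precisely the place where Section 3 invoked the F\o lner estimate $|I_n\bigtriangleup g\cdot I_n|/|I_n|\to 0$, and it is the only place the mean is needed. Everything downstream---the inner-regular definition $\nu(C)=\sup_{D\subseteq C,\,D\text{ closed}}\rho(D)$, the two approximation lemmas, and the $\sigma$-additivity argument through compactness of $Y$ and a finite subcover---uses only finite additivity, monotonicity, and Tychonoff compactness, none of which refer to countability, so each carries over word for word.

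I would then define $\tilde E_s(y):=y(s)$; these evaluation maps are continuous and satisfy $\tilde E_{gs}=\tilde E_s\circ T_g$ by the same computation as before. In contrast to the second countable case I would \emph{not} assert that $\tilde E_s$ is Borel measurable into $X$---this is why the present statement drops the word ``measurable''---but only that each composite $\tilde u:=u(\tilde E_{s_1},\ldots,\tilde E_{s_k})$, being the continuous $u$ precomposed with continuous evaluations, is continuous and bounded on $Y$.

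The main obstacle is making sense of $\int\tilde u\,d\mu$ and running the final approximation when $X$ is not second countable: without a countable base one cannot write the preimage of an open subset of $X^k$ as a $\sigma(\mathcal{C})$ set, so measurability of $\tilde u$ is no longer automatic from the simple sets. I would resolve this by choosing $\mathcal{O}$ to consist of cozero sets generating the Baire $\sigma$-algebra of $X$, so that $\{u>\alpha\}$, being cozero in $X^k$, pulls back into $\sigma(\mathcal{C})$ and $\tilde u$ becomes $\mu$-integrable with the $\mathcal{C}$-simple approximation the end of Section 3 requires; equivalently, and more cleanly, the whole construction is subsumed by the Riesz representation theorem applied to the positive normalized functional $f\mapsto m\big(g\mapsto f(E\circ g)\big)$ on $C(Y)$, which yields a regular Borel measure $\mu$ for which $\tilde u\in C(Y)$ is integrable and $\int\tilde u\,d\mu=m\big(g\mapsto u(E(gs_1),\ldots,E(gs_k))\big)$ outright. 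In either route I would record the conclusion as this equality with the mean; since an uncountable $G$ admits no F\o lner sequence, the displayed $\liminf$/$\limsup$ form is best read as the specialization to a $G$ that does carry such a sequence $\{I_n\}$, with $m$ chosen to be a weak${}^*$ limit point of the F\o lner averages $\frac{1}{|I_n|}\sum_{g\in I_n}\delta_g$, whose value on each such function then lies in $[\liminf,\limsup]$.
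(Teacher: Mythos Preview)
Your proposal is correct and takes essentially the same route as the paper: the paper's entire proof is the instruction to set $\rho(C):=m(\{g\mid E\circ g\in C\})$ and then replace every occurrence of $\lim_{t\rightarrow\infty}\frac{|\{g\in I_{n_t}\mid\phi(g)\}|}{|I_{n_t}|}$ in Section~3 by $m(\{g\in G\mid\phi(g)\})$, which is exactly your plan. You in fact go further than the paper in two places it leaves silent: you notice that dropping second countability of $X$ threatens the measurability of $\tilde u$ and propose either restricting to the Baire $\sigma$-algebra or invoking Riesz representation on $C(Y)$, and you observe that the displayed $\liminf/\limsup$ inequalities only make literal sense when $G$ still carries a F{\o}lner sequence, with the clean conclusion in general being $\int\tilde u\,d\mu=m\big(g\mapsto u(E(gs_1),\ldots,E(gs_k))\big)$.
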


The Furstenberg-style proof then requires that $\rho(C)$ be given by
\[\rho(C):=m(\{g\mid E\circ g\in C\})\]
and the rest of the proof goes through by replacing every occurrence of $\lim_{t\rightarrow\infty}\frac{|\{g\in I_{n_t}\mid \phi(g)|}{|I_{n_t}|}$ by $m(\{g\in G\mid\phi(g)\})$.  Similarly, in the non-standard proof we take $\mu(B)$ for internal sets $\{B_n\}$ to be given by
\[\mu(\{B_n\}):=m^*(B_n)\]

This is unsatisfying, however, since when $S$ is uncountable, we would generally expect it to have some sort of topological structure in its own right.  Specifically, if $E$ is a continuous function from $S$ to $X$, we would like a natural topology on $Y$ so that $\tilde E$ is a continuous function from $Y\times S$ to $X$.

Bergelson, Boshernitzan, and Bourgain \cite{BergelsonBB} give an example showing the correspondence fails for continuous $\mathbb{R}$ actions (even when $X$ is the discrete set $\{0,1\}$), so we may ask what additional conditions are needed.

In order to make the Furstenberg-style proof go through, it suffices to give a topology on the space $C(S,X)$ of continuous functions from $S$ to $X$ which makes the evaluation function continuous but is still compact.  When $S$ is locally compact, the smallest topology making the evaluation function continuous is the compact-open topology; we must then check that the closure of $\{g\circ E\mid g\in G\}$ is compact in this topology.  The Arzel\'a-Ascoli Theorem states that this occurs exactly if this set is evenly continuous; that is, for every $s\in S$, every $x\in X$, and any neighborhood $V$ of $x$, there are neighborhoods $U$ of $s$ and $U'$ of $x$ such that whenever $ g\circ E(s)\in U'$, $ g\circ E(s')\in U'$ for every $s'\in U$.  (The additional requirement of the theorem, pointwise boundedness, is automatically satisfied since $X$ is assumed to be compact.)

Therefore we obtain the following extension of the correspondence:
\begin{theorem}
  Let $S$ be a locally compact space and let $G$ be an amenable semigroup acting continuously on $S$ with left-invariant mean $m$.  Let $X$ be a compact space.  Let $E:S\rightarrow X$ be continuous.  Suppose that $\{ g\circ E\mid g\in G\}$ is evenly continuous.

  Then there are a dynamical system $(Y,\mathcal{B},\mu,(T_g)_{g\in G})$ and functions $\tilde E_{s}:Y\rightarrow X$ for each $s\in S$ such that the following hold:
  \begin{itemize}
  \item For any $g,s$, $\tilde E_{gs}=\tilde E_s\circ T_g$
  \item For any continuous function $u:X^k\rightarrow \mathbb{R}$ for some integer $k$, and any finite sequence $s_1,\ldots, s_k$
    \begin{align*}
      \liminf_{n\rightarrow\infty}\frac{1}{|I_n|}\sum_{g\in I_n}u(E(g s_1),\ldots,E(gs_k))&\leq\int  u(\tilde E_{s_1},\ldots,\tilde E_{s_k})d\mu\\
      &\leq\limsup_{n\rightarrow\infty}\frac{1}{|I_n|}\sum_{g\in I_n}u(E( gs_1),\ldots,E( gs_k))
    \end{align*}
  \item $\mathcal{B}$ consists of the Borel sets generated by a topology on $Y$ such that, under this topology, the functions $(g,y)\mapsto T_g y$ and $(s,y)\mapsto \tilde E_s(y)$ are continuous
  \end{itemize}
\end{theorem}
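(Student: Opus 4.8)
The plan is to carry out the Furstenberg-style construction inside the space of continuous functions rather than the full product space $X^S$, so that the required topology on $Y$ is present from the outset. Equip the space $C(S,X)$ of continuous maps with the compact-open topology and, writing $E\circ g$ for the map $s\mapsto E(gs)$, set
\[Y:=\overline{\{E\circ g\mid g\in G\}},\]
the closure of the orbit. Because $X$ is compact, the orbit is pointwise relatively compact automatically, and by hypothesis it is evenly continuous; the Arzel\`a--Ascoli theorem then gives that $Y$ is compact (and Hausdorff). This is the one and only place the even continuity hypothesis is used, and it is the conceptual heart of the argument: it is exactly what provides a compact carrier for the measure while keeping us inside $C(S,X)$.

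With $Y$ in hand I would re-run the amenable Furstenberg-style proof of the preceding theorem verbatim, simply reading $Y$ for $X^S$ throughout. Define $T_g\colon Y\to Y$ by $T_g y:=y\circ g$, so that $T_g y(s)=y(gs)$; this carries the orbit into itself, since $T_g(E\circ h)=E\circ(hg)$, and it is continuous in the compact-open topology (the preimage of a subbasic set $\{y\mid y(K)\subseteq U\}$ is $\{y\mid y(gK)\subseteq U\}$, with $gK$ compact), hence maps $Y$ into $Y$. Define $\rho(C):=m(\{g\mid E\circ g\in C\})$ on the algebra generated by the simple sets $\{y\mid y(s)\in U\}$ and extend it to a Borel measure $\mu$ by the same inner/outer regularity and $\sigma$-additivity argument as before; that argument uses only the compactness of $Y$ (for the finite-subcover step) and the left-invariance of $m$ (for the $T_g$-invariance of $\mu$), both of which remain available. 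Setting $\tilde E_s(y):=y(s)$ yields the equivariance $\tilde E_{gs}=\tilde E_s\circ T_g$ and the liminf/limsup sandwiching exactly as in the amenable case. The only point needing a remark is that the simple sets $\{y\mid y(s)\in U\}$ are still open, which holds because singletons are compact, so these are subbasic compact-open sets; in particular each $\tilde E_s$ is continuous, and a fortiori measurable.

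It remains to verify the third bullet, and here I would appeal to standard properties of the compact-open topology. The map $(s,y)\mapsto\tilde E_s(y)=y(s)$ is precisely the evaluation map $S\times Y\to X$, which is continuous because $S$ is locally compact and $Y$ carries the compact-open topology. For $(g,y)\mapsto T_g y$ I would invoke the exponential law: since $S$ is locally compact Hausdorff, a map into $C(S,X)$ is continuous if and only if its adjoint is, and the adjoint here is $(g,y,s)\mapsto T_g y(s)=y(gs)$, which factors as $(g,y,s)\mapsto(y,gs)$ followed by evaluation $(y,s')\mapsto y(s')$; the first factor is continuous by the joint continuity of the action $G\times S\to S$ and the second is the evaluation just shown continuous, so the composite, and hence $(g,y)\mapsto T_g y$, is continuous. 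I expect the main obstacle to lie not in the Arzel\`a--Ascoli step, which is a clean invocation once even continuity is assumed, but in this last verification: correctly marshalling the exponential law and the local compactness of $S$ to obtain \emph{joint} continuity of the action map, and confirming that the measure construction really does transfer to the smaller compact $Y$ without the countability that the original diagonalization used (here supplied, as in the amenable case, by direct appeal to the mean $m$).
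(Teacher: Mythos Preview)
Your proposal is correct and follows essentially the same route as the paper: take $Y$ to be the closure of the orbit $\{E\circ g\mid g\in G\}$ in $C(S,X)$ under the compact-open topology, invoke Arzel\`a--Ascoli via even continuity and compactness of $X$ to obtain compactness of $Y$, and then re-run the amenable Furstenberg-style argument using the mean $m$ in place of the diagonalization. In fact you supply more detail than the paper's sketch, notably the exponential-law verification of joint continuity of $(g,y)\mapsto T_g y$, which the paper leaves implicit.
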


\bibliography{Norm}
\end{document}